\theoremstyle{plain}
\newtheorem{theorem}{Theorem}[section]
\newtheorem{lemma}[theorem]{Lemma}
\theoremstyle{definition}
\newtheorem{assumption}[theorem]{Assumption}
\theoremstyle{remark}
\newtheorem{remark}[theorem]{Remark}
\title{On the Adversarial Robustness of Benjamini Hochberg}
\author{%
  Louis L Chen\thanks{website: https://louislchen.github.io/} \\
 Operations Research Department\\
  Naval Postgraduate School\\
  Monterey, CA 93943 \\
  \texttt{louis.chen@nps.edu} \\
  \And
  Roberto Szechtman \\
  Operations Research Department \\
  Naval Postgraduate School\\
  Monterey, CA 93943 \\
  \texttt{rszechtm@nps.edu} \\
  \And
  Matan Seri \\
  Operations Research Department \\
  Naval Postgraduate School\\
  Monterey, CA 93943 \\
  \texttt{matan.seri@gmail.com} \\
}
\begin{document}

\maketitle

\begin{abstract}
 The Benjamini-Hochberg (BH) procedure is widely used to control the false detection rate (FDR) in multiple testing. Applications of this control abound in drug discovery, forensics, anomaly detection, and, in particular, machine learning, ranging from nonparametric outlier detection to out-of-distribution detection and one-class classification methods. Considering this control could be relied upon in critical safety/security contexts, we investigate its adversarial robustness. More precisely, we study under what conditions BH does and does not exhibit adversarial robustness, we present a class of simple and easily implementable adversarial test-perturbation algorithms, and we perform computational experiments. With our algorithms, we demonstrate that there are conditions under which BH's control can be significantly broken with relatively few (even just one) test score perturbation(s), and provide non-asymptotic guarantees on the expected adversarial-adjustment to FDR. Our technical analysis involves a combinatorial reframing of the BH procedure as a ``balls into bins'' process, and drawing a connection to generalized ballot problems to facilitate an information-theoretic approach for deriving non-asymptotic lower bounds.
\end{abstract}

\section{Introduction} 
Multiple testing has broad applications in drug discovery, forensics, candidate screening, anomaly detection, and in particular, machine learning. Indeed, recent works \cite{bates2023testing, jin2023selection, magesh2023principled, gBH}, in nonparametric outlier detection, \textit{out-of-distribution detection} (OOD), and one-class classification have all adopted multiple testing methodology in developing principled decision rules with statistical guarantees. In fact, the Benjamini-Hochberg (BH) multiple testing procedure, widely used to control the \textit{false detection rate} (FDR), is either used or modified in all these recent methods. Considering this FDR control could be relied upon in some critical (safety/security) contexts, for which false positives incur costs, we investigate its adversarial robustness.



Adversarial corruption presents a challenge to statistical methodology, and is a modern-day concern due to not only the ease with which high volumes of data can now be accessed/processed but also the increasingly widespread use of statistical procedures. This threat poses vulnerabilities to machine learning tasks like OOD, which would aim to fortify security systems like fraud detection \cite{bates2023testing}. Manipulation of data and experimental results are common means by which incorrect conclusions can be reached. Worse, strategic perturbation can dramatically decrease the fidelity of the models and methods used. A burgeoning field of adversarial corruption has  gained traction in recent years to meet this concern, most notably in the area of (deep) machine learning; see, for example, \cite{madry2018towards, GoodfellowAdversarial, krishnamurthy2023contextual}. In this work we address adversarial corruption in hypothesis testing, specifically in the large-scale context in which the primary focus is on the aggregate metric: FDR. 


BH \cite{benjamini1995controlling} is one of the most widely used multiple testing procedures, which upon input of a collection of p-values, outputs a rejection region ensuring that the FDR is no greater than a user-defined threshold $q \in (0,1).$ This control of FDR holds under independently generated p-values -- as well as some restricted forms of dependency like \textit{positive regression dependent on a subset} (PRDS) \cite{benjamini2001} -- but it generally holds without strong assumptions on the alternative distributions. This degree of distributional robustness, however, could be said to come at the cost of adversarial robustness, as we show in this work. 

\subsection{Literature Review}  
Although OOD methods \cite{lee2018simple, Hendrycks2017,Lee2018,Lee2018_ICLR} are often complex and not always supported by statistical guarantees, conformal inference has made possible the use of one-class classifiers to generate conformal p-values for which OOD can now be conducted via multiple testing. This has led to the adoption of the BH procedure in OOD. Indeed, \cite{bates2023testing} leverages the FDR control afforded by BH over conformal p-values (shown to be PRDS) to test for outliers. More precisely, given a test set of observations for which we wish to identify as inliers or outliers (out of distribution), a conformal p-value is generated for each observation, which is then processed by BH to decide which are likely outliers. We refer the reader to  \cite{jin2023selection, magesh2023principled} for other recent works along this vein.  

In recent years, concerns have risen over the possibility of adversarial manipulation of statistical methodologies. This manipulation commonly occurs at the level of data collection and training, often invalidating the assumptions made regarding how data is drawn, but it can also occur at test time. There is a growing literature on \textit{adversarial robustness}, which is concerned with securing statistical methods like (deep) machine learning \cite{madry2018towards, GoodfellowAdversarial, krishnamurthy2023contextual}, linear regression \cite{berend2020biased}, M-estimation \cite{bhatt2022minimax}, and online learning \cite{lykouris2018stochastic, gupta2019better, amir2020prediction}. In particular, \cite{diakonikolas2023algorithmic} considers contamination models that incorporate (adaptive) adversarial perturbation of up to an $\epsilon-$ fraction of drawn data. Indeed, we adopt this modeling in our own study - see \eqref{eq:: c-PertProb}. As well, a similar concept to the notion of adversarial robustness that we adopt in this paper is one the literature refers to as \textit{perturbation resilience}. Generally speaking, a problem instance is called $\alpha-$ perturbation resilient when despite a degree (parameterized by $\alpha$) of perturbation to the instance, the optimal solution does not change. First introduced in \cite{bilu2012stable} for combinatorial optimization (in particular, MAX-CUT), the concept has since also inspired research into devising resilient unsupervised learning, particularly in clustering \cite{Resilient_K_Clustering, awasthi2012center, angelidakis2017algorithms}.

With respect to the hypothesis testing literature, there are recent adversarial robust studies focused on simple \cite{JinAdversarialHypothesis} and sequential hypothesis testing \cite{cao2022adversarially} from a game theoretic perspective, in which protection of statistical power, risk, or sample size from corruption is of chief concern. Complementing the adversarial robust perspective are several distributionally robust studies, in which the data-generating distribution is known only to lie in a parametric family. Recent works include \cite{cao2022adversarially, li2019game, gao2018robust} which focus on test risk in single and sequential hypothesis testing settings employing uncertainty sets of distributions of fixed distance (e.g. Wasserstein, phi-divergence) for the null and alternative hypotheses. In contrast to these works, this paper is focused on FDR, not individual test risk. Furthermore, distributional robustness is not equivalent to the perturbation-robustness that this paper and other adversarial robust studies seek in general. Indeed, \cite{su2018fdr} shows that the BH procedure's FDR control exhibits a distributional robustness to possible dependence between null and non-null hypotheses. On the other hand, our work would illustrate that, distributional robustness aside, BH can lack adversarial robustness.

\subsection{Preliminaries} \label{section:: Preliminaries}
Let $\mathcal{N} := \{1, \ldots, N\}$, where $N \in \mathbbm{Z}_+$ 
, be a finite set for which each member $i \in \mathcal{N}$ denotes a binary hypothesis test deciding between a null and alternative hypothesis. Further, there exists a partitioning, $\mathcal{N} = \mathcal{H}_0 \cupdot \mathcal{H}_1$, such that the correct decision for test $i \in \mathcal{N}$ is either null if $i \in \mathcal{H}_0$, or alternative if $i \in \mathcal{H}_1$. Here, $\mathcal H_0$ and $\mathcal H_1$ are the (unknown) sets of null and alternative test indices, respectively; consequently, for any test $i$, the correct decision  (i.e. set membership) is unknown to the decision maker. In fact, while the number of tests $N$ is known (and large, on the order of thousands), neither $N_0 := |\mathcal{H}_0|$ nor $\pi_0 := \frac{N_0}{N}$ is known to the decision maker, although $\pi_0 \geq 0.90$ ``is reasonable in most large-scale testing situations" - (\cite{efron2013computer}, p. 285).

For each test $i \in \mathcal{N},$ p-value $p_i \in [0,1]$ is randomly generated (independent of all other $p_j$, $j\neq i$), which we model as a draw from either $U(0,1)$ when $i \in \mathcal{H}_0$ ($p_i$ then referred to as a \textit{null p-value}) or some alternative distribution $\mathbbm{P}^1_i$ on $[0,1]$ when $i \in \mathcal{H}_1$ ($p_i$ then referred to as an \textit{alternative p-value}).   A multiple-testing algorithm $\mathcal{A}$ takes as input a randomly generated collection of p-values $p = \{p_i\}_{i \in \mathcal N}$ and outputs for each test $i$ a determination $\mathcal{A}(i) \in \{0,1\}$ with $\mathcal{A}(i) = 1$ iff the determination is to reject the null hypothesis (i.e., claim $i \in \mathcal{H}_1$), or sometimes referred to as ``make a discovery" for the $i$-th test. With $a_p:= \sum_{i\in \mathcal{H}_0} \mathcal{A}(i)$ denoting the number of false discoveries, and $R_p:= |\mathcal{A}^{-1}(1)|$ denoting the number of rejections/discoveries made, we refer to $FDP[\mathcal{A};p]:= \frac{a_p}{R_p \vee 1}$ as the false detection proportion summarizing $\mathcal{A}$'s decisions on p-values $p$, where $x \vee y$ is shorthand for $\max(x, y)$ for any $x, y \in \mathbbm{R}$. We refer to its expectation with respect to the random generation of $p$ as the \textit{false detection rate}, $FDR(\mathcal{A}):= \mathbbm{E}_p FDP[\mathcal{A};p]$. 

In this work, we focus on the Benjamini Hochberg procedure, a widely-used multiple-testing algorithm. 
\subsection{The Benjamini Hochberg (BH) Procedure}
Given a collection of p-values $p = \{p_i\}_{i \in \mathcal{N}}$ and desired control level $q\in(0,1)$ to bound FDR, the BH procedure $BH_q$ operates as follows: 
\begin{enumerate}[itemsep=1mm, parsep=0pt]
    \item The p-values are sorted in increasing order, $p_{(1)} \leq p_{(2)} \leq \ldots \leq p_{(N)}$.
    \item  The index $i_{\max}:= \max\Bigl\{i \in [0,N]_{\mathbbm{Z}}: p_{(i)} \leq i\frac{q}{N}\Bigr\}$ is identified, with $p_{(0)}:= 0$.
    \item Reject the tests corresponding to the smallest $i_{\max}$ p-values: $p_{(1)}, p_{(2)},\ldots,p_{(i_{\max})}$.
\end{enumerate}
$BH_q$ provides provable FDR control at the level of $q$ without any assumptions on the alternative distributions $\{\mathbbm{P}^1_i\}_{i \in \mathcal{H}_1}$.
\begin{lemma}[Theorem 4.1 from \cite{efron2013computer}] \label{lemma::BH}
    If every null p-value is super-uniform, equiv., $p_i \sim \mathbbm{P}_i^0 \succcurlyeq U(0,1)$ for all $i \in \mathcal{H}_0,$ and the collection is jointly independent, then regardless of the collection of alternative distributions 
    $\{\mathbbm{P}^1_i\}_{i \in \mathcal{H}_1}$,  
    \begin{equation}\label{eq:bh1}
    FDR(BH_q) = \pi_0 q \leq q, ~~~~~ \forall q \in (0,1).    
    \end{equation}
\end{lemma}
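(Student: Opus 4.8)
The plan is to bound the false detection rate directly by expanding it as a sum over the null hypotheses and then exploiting the step-up geometry of $BH_q$ together with the joint independence of the p-values. Write $R := R_p = i_{\max}$ for the (random) number of rejections and $t^{*} := Rq/N$ for the induced rejection threshold, so that a null $i \in \mathcal{H}_0$ is a false discovery precisely when $p_i \leq t^{*}$. By linearity of expectation,
\[
FDR(BH_q) = \sum_{i \in \mathcal{H}_0} \mathbbm{E}\!\left[\frac{\mathbbm{1}[p_i \leq t^{*}]}{R \vee 1}\right].
\]
Since $p_i \leq t^{*}$ forces $R \geq 1$, on that event $R \vee 1 = R$, and I would decompose each summand according to the realized value of $R$, obtaining $\sum_{k=1}^N \tfrac{1}{k}\,\mathbbm{P}[p_i \leq kq/N,\, R = k]$, using that $\{R = k\}$ pins the threshold to $t^{*} = kq/N$.

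The key step is a \emph{leave-one-out} decoupling of the numerator event from $R$. Let $N(t) := \#\{j : p_j \leq t\}$ and $N_{-i}(t) := \#\{j \neq i : p_j \leq t\}$, and recall the self-consistency characterization $R = \max\{k : N(kq/N) \geq k\}$. Restricted to the event $\{p_i \leq kq/N\}$, test $i$ contributes exactly one to every count $N(k'q/N)$ with $k' \geq k$, so one can show that $\{R = k\}$ reduces to $\{N_{-i}(kq/N) \geq k-1\} \cap \bigcap_{k' > k}\{N_{-i}(k'q/N) \leq k'-2\}$, an event $E_k^{(i)}$ depending only on $\{p_j\}_{j \neq i}$. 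Equivalently, $E_k^{(i)}$ is the event that running $BH_q$ with $p_i$ pinned to $0$ produces exactly $k$ rejections. By joint independence, $p_i$ is independent of $E_k^{(i)}$, hence $\mathbbm{P}[p_i \leq kq/N,\, R = k] = \mathbbm{P}[p_i \leq kq/N]\,\mathbbm{P}[E_k^{(i)}]$.

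It then remains to assemble the pieces. Super-uniformity of the nulls gives $\mathbbm{P}[p_i \leq kq/N] \leq kq/N$, which cancels the $1/k$ weight and yields
\[
\mathbbm{E}\!\left[\frac{\mathbbm{1}[p_i \leq t^{*}]}{R \vee 1}\right] \leq \frac{q}{N}\sum_{k=1}^N \mathbbm{P}[E_k^{(i)}].
\]
Because the pinned procedure always makes at least one, and exactly one well-defined number of, rejections, the events $\{E_k^{(i)}\}_{k=1}^N$ are disjoint and exhaustive, so $\sum_{k} \mathbbm{P}[E_k^{(i)}] = 1$ and each null contributes at most $q/N$. Summing over the $N_0$ nulls gives $FDR(BH_q) \leq \tfrac{N_0}{N} q = \pi_0 q \leq q$, with equality whenever the nulls are exactly $U(0,1)$ so that $\mathbbm{P}[p_i \leq kq/N] = kq/N$.

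The main obstacle I anticipate is making the decoupling step fully rigorous: one must verify that, conditioned on $i$ being below threshold, the exact value of $p_i$ is irrelevant to whether $R = k$, since this is precisely what lets the independence factorization go through. This hinges on the step-up structure (inserting one surely-rejected p-value shifts each relevant count by exactly one) and on correctly identifying $E_k^{(i)}$ with the rejection count of the pinned procedure, so that the partition property $\sum_k \mathbbm{P}[E_k^{(i)}] = 1$ holds. Once that is in place, the super-uniform bound and the summation over nulls are routine.
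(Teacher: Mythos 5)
Your proof is correct, and the steps that need care do go through: on $\{p_i \le kq/N\}$ every count $N(k'q/N)$ with $k' \ge k$ equals $N_{-i}(k'q/N)+1$, so $\{R=k\}\cap\{p_i \le kq/N\} = E_k^{(i)}\cap\{p_i\le kq/N\}$; moreover $E_k^{(i)}$ is exactly the event that the pinned procedure (whose counts are $N_{-i}(t)+1$ for \emph{every} $t\ge 0$, since the pinned value $0$ lies below any threshold) rejects exactly $k$, which makes the $E_k^{(i)}$ disjoint and exhaustive, so the factorization, the cancellation of $1/k$ by super-uniformity, and the summation over $\mathcal{H}_0$ are all sound. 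This leave-one-out route is, however, genuinely different from the paper's: the paper never proves the lemma (it is cited from \cite{efron2013computer}), but the machinery built in Section \ref{sec:: BallsIntoBins} encodes the alternative martingale proof, which in fact appears implicitly inside the paper's proof of Theorem \ref{theorem:: LowerBound}. There, since $FDP[BH_q;p] = B^0_{1:\tilde k \vee 1}/(\tilde k \vee 1)$ (note $B^0_{1:1}=0$ on $[\tilde k = 0]$), and $\tilde k$ is a stopping time for the backwards filtration under which $\frac{B^0_{1:N}}{N}, \frac{B^0_{1:N-1}}{N-1}, \ldots, \frac{B^0_{1:1}}{1}$ is a martingale, optional stopping gives in one line
\[
FDR(BH_q) \;=\; \mathbbm{E}\left[\frac{B^0_{1:\tilde k \vee 1}}{\tilde k \vee 1}\right] \;=\; \mathbbm{E}\left[\frac{B^0_{1:N}}{N}\right] \;=\; \frac{N_0\, q}{N} \;=\; \pi_0 q .
\]
What each approach buys: yours is elementary, needs only the independence of the single $p_i$ from the rest, localizes the damage of each null to a per-hypothesis contribution of at most $q/N$, and handles super-uniform nulls term by term --- which correctly exposes that under merely super-uniform nulls the displayed equality in the lemma should really be $FDR(BH_q) \le \pi_0 q$, with equality (as you note) when the nulls are exactly $U(0,1)$. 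The martingale route delivers the exact equality for uniform nulls instantly and integrates with the rest of the paper (the same optional-stopping device powers Theorems \ref{proposition:: RejectionCountLargeMuOnePartTwo} and \ref{proposition:: RejectionCountLargeMuOne}), but its martingale property rests on the conditional exchangeability of uniform nulls, so accommodating super-uniformity there would require a supermartingale modification rather than the one-line inequality your decomposition provides.
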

\begin{remark}
    In fact, the assumption of joint independence of the null p-values can be relaxed to a form of dependency known as PRDS - see \cite{benjamini2001} for this generalization of Lemma \ref{lemma::BH}.
\end{remark}

\subsection{The Adversary and the c-Perturbation Problem}
We model an (\textit{omniscient}) adversary with knowledge of $\mathcal{H}_0$, $\mathcal{H}_1$, and that knows the decision maker's choice of control level $q$. 
The adversary receives the 
p-values $p = (p_i)_{i=1}^N$
\textit{after} they are generated but \textit{before} they are received by the decision maker, or before test time. Given the ability to perturb $c \geq 1$ p-values, the adversary solves 
\begin{equation} \label{eq:: c-PertProb}
    \max_{p': \|p - p'\|_0 \leq c} FDP[BH_q; p'] \tag{c-Perturb},
\end{equation} where $p'$ denotes the p-values derived from an adjusted collection of 
p-values $p' = (p'_i)_{i=1}^N$.
In words, the adversary finds the perturbation of at most $c-$many 
p-values
before the execution of $BH_q$ so as to maximize the \textit{adversarially-adjusted} false detection proportion $FDP[BH_q, p']$. 
Perturbation of p-values is implicitly the result of data perturbation, and we refer to both Remark \ref{remark:: Zscores} and Section \ref{sec:: RealDataExperiment} for examples and experiments involving direct perturbation of data. 

While we assume omniscience for the adversary throughout, we will briefly address modifications in analysis for an \textit{oblivious} adversary that has no knowledge of $\mathcal{H}_0$, nor $\mathcal{H}_1$. Indeed, our algorithms to be presented can be modified naturally for implementation by an oblivious adversary (see comments in Section \ref{sec:: MOVE-1}); further, there is nearly equivalent performance when $\pi_0$ is large, as is typical. Hence, it is for the sake of brevity that we omit explicit analysis of the oblivious adversary. 

\subsection{Main Results}
Intuitively, the effect of a small number of 
p-value
perturbations becomes insignificant in settings where a large number of tests are rejected (see Theorem \ref{theorem:: LowerBound}). This happens, for instance, when either the number of tests $N$, or the control level $q$, or the distance (e.g. KL-divergence) between the null and alternative distributions, is large. For this reason, we focus on results that are non-asymptotic in the number of tests $N$. 

In Section \ref{sec:: MOVE-1}, we present the algorithm INCREASE-c that uses strategic increases to $c$ null 
p-values
to induce expansion of the BH rejection region. We also present an efficient, optimal algorithm MOVE-1 (Appendix Section \ref{sec:: AppendixMove1}) for the adversary's maximization of FDR with at most one (i.e. $c=1$) 
p-value
perturbation.

In Section \ref{sec:: LowerBounds} we discuss the adversarial robustness of BH through study of the adversarial adjustments to FDR by the INCREASE-c algorithms, revealing where its control is and isn't adversarially robust. 

In Section \ref{sec:: Experiments} we provide accompanying numerical experiments on i.i.d. as well as PRDS p-values. 

\section{BH as Balls into Bins} \label{sec:: BallsIntoBins}

In this section we will establish important notation for the discussions to follow. 
We reduce the real line to a collection of $N+1$ ``bins". $N_0$ and $N_1$ balls will each be assigned to one of these bins independently of each other, from discrete distributions that are specified in the next subsection. 
The main motivation for this reduction is to facilitate discussion of effective perturbations in Section \ref{sec:: MOVE-1}, and for the technical analysis in Section \ref{sec:: LowerBounds}. 

\subsection{The Balls into Bins System}


We partition 
the segment $[0,1]$
into $N$ equiprobable segments that will be referred to as \textit{bins}. We define the $i$-th bin
$B_i:= \Bigl\{p \in \mathbbm{R}: (i-1) \frac{q}{N}\leq p < i\frac{q}{N}\Bigr\},$
for $i = 1, \ldots, N.$  What remains forms bin $N+1$, i.e., $B_{N+1}:= \{p \in \mathbbm{R}: 1 \geq  p \geq 1 - q\}$. 
For shorthand, we write 
$B_{1:i}:= \cup_{l = 1}^i B_l = \{p \in \mathbbm{R}: 0 \leq p < i \frac{q}{N}\},$
Finally, we write 
$B^0_i:= |B_i \cap \{p_j\}_{j \in \mathcal{H}_0}|$ 
and 
$B^\mathcal{N}_i:= |B_i \cap \{p_j\}_{j \in \mathcal{N}}|$ 
for bin $i$'s, respectively, \textit{null-load} and \textit{total load}. The \textit{alternative- load} $B^1_i$ is defined analogously, as are $B_{1:i}^0$ $B_{1:i}^\mathcal{N},$ and $B_{1:i}^1$.

\paragraph{Rejection Count:} Borrowing terminology from the classic \textit{balls into bins} problem of probability theory \cite{raab1998balls}, this framework facilitates a re-interpretation of the random drawing of 
p-values
as balls being randomly placed into an ordered collection of bins, enumerated 1 up to $N+1$. Framed in this way, we see that 
$BH_q$ operates by identifying the \textit{rejection count} 
\begin{equation} \label{def:: RejectionCount}
\tilde{k} = \max\Bigl\{i \in [0,N]_{\mathbbm{Z}} : B^\mathcal{N}_{1:i} = i\Bigr\},
\end{equation}
which corresponds to the largest collection of consecutive bins $1, \ldots, i$ that collectively contain precisely $i$ balls, so that $BH_q$ rejects all tests with 
p-values
lying in the first $\tilde{k}$ bins. The case $\tilde k=0$ corresponds to rejecting no tests. In fact, $\tilde{k}$ is a stopping time under a filtration $\mathcal{F}$ that we define next. 
\paragraph{Filtration $\mathcal{F} = \{\mathcal{F}_i\}_{i = 0}^N$:} Let $\Omega:= [0,1]^N$ be a sample space with the classical Borel $\sigma$- algebra $\mathcal{B}$ and (with slight abuse of notation) probability measure $\mathbbm{P} := \left(\otimes_{i \in \mathcal{H}_0} U(0,1) \right) \otimes \left( \otimes_{i \in \mathcal{H}_1} \mathbbm{P}_i^1 \right)$. We define a filtration beginning with $\mathcal{F}_{N}:= \sigma(B^\mathcal{N}_{N+1}, B^0_{N+1})$, and continuing inductively ($N$ towards $0$), let $\mathcal{F}_{i}$ be the $\sigma$-algebra generated by $\{B^\mathcal{N}_{j}\}_{j=i+1}^{N+1}$ and $\{B^0_{j}\}_{j=i+1}^{N+1}$. In words, this filtration corresponds to what is cumulatively learned about the bin loads (null and total) upon examination of the bins in sequence starting with bin $N+1$ and concluding with bin $1,$ assuming each observed p value comes with correct identification of whether or not $i \in \mathcal{H}_0.$


We note the fact that for $\ell > i$, it follows that $E\left[\frac{B_{1:i}^0}{i} \Big |\Big| \mathcal{F}_\ell \right] = E\left[\frac{B_{1:i}^0}{i} \Big |\Big| \frac{B_{1:\ell}^0}{\ell}\right] = \frac{B_{1:\ell}^0}{\ell}$
a.s., so that $\frac{B_{1:N}^0}{N}, \frac{B_{1:N-1}^0}{N-1}, \ldots,\frac{B_{1:1}^0}{1}$ form a martingale sequence 
adapted to the filtration. This fact will prove useful when combined with the optional stopping theorem to facilitate several results in this work.

Under this lens, the adversary's (algorithmic) task reduces to reshuffling 
p-values
among the bins, and in Section \ref{sec:: MOVE-1} we demonstrate there are indeed simple, tractable ways of performing this to manipulate BH, with potentially great effect on FDR control. The key insight is that there exist alternative stopping times that present alternative rejection counts (/regions) that can break FDR control. 

\section{Adversarial Algorithm: INCREASE-c} \label{sec:: MOVE-1}
Throughout this section, we don the role of the adversary and study the \ref{eq:: c-PertProb} problem, in which we are given $q \in (0,1)$, a realized collection 
$p = \{p_i\}_{i\in \mathcal{N}}$ (along with the labels of null or alternative for each $p_i$),
and a budget $c \geq 1,$ and our task is to produce a perturbed collection 
$p'.$
Toward this, we focus on a procedure called INCREASE-c that despite its sub-optimality (see Appendix Section \ref{sec:: AppendixMove1}) is intuitive and simple to execute; further, as theoretical and empirical analysis in sections \ref{sec:: LowerBounds} and \ref{sec:: Experiments} respectively show, it has strong performance in expectation. 




We begin by defining a random variable that is a stopping time adapted to $\mathcal{F}$; given an integer $c \geq 1$, let
\begin{equation} \label{def:: k+}
\tilde{k}_{+c}:= 
\begin{cases}
    \max\{i \in [c,N]_{\mathbbm{Z}}: B^\mathcal{N}_{1:i} = i - c\} & B^0_{N+1} \geq c\\
    \tilde{k} & B^0_{N+1} < c,
\end{cases}
\end{equation} 
and we choose to write $\tilde{k}_+$ in place of $\tilde{k}_{+1}$. 
\paragraph{Increasing the Rejection Count}
The interest in $\tilde{k}_{+c}$ is that if we moved any selection of $c$ null 
p-values
from bin $N+1$ into bin $\tilde{k}_{+c}$ (in fact any bin $i \leq \tilde{k}_{+c}$), then $BH_q$ would output a new, increased rejection count $\tilde{k}_{+c}$. 
We formally study this in Section \ref{sec:: LowerBounds}. In the meantime, we comment on the increase $\tilde{k}_{+c} - \tilde{k}$, which is a difference between two stopping times 

It is easy to see that $\tilde{k}_{+c} - \tilde{k} \geq c$ whenever $B^0_{N+1} \geq c$; hence, the increase in the rejection count is at least $c$, but possibly more. We provide a stronger lower bound on this increase by utilizing the ratio between the number $B^0_{\tilde{k} + 2:N}$ of nulls not rejected by $BH_q$ and the number $N - (\tilde{k} + 1)$ of bins left outside of the $BH_q$ rejection region in the case of no corruption. 
Computational experiments indicate comparable performance of this bound with those of simulations presented in Section \ref{sec:: Experiments}'s Table \ref{tab:: INCREASE-C versus Mu1}. 

\begin{restatable}{theorem}{RejectionCountLargeMuOnePartTwo}\label{proposition:: RejectionCountLargeMuOnePartTwo}
If $c\geq 1,$ then
\begin{align} \label{eq:: LowerBoundRejectionCountIncrease}
\mathbbm{E}\left[\tilde{k}_{+c} - \tilde{k}\|B^0_{N+1} \geq c\right] \geq  \frac{c-1}{1 - \mathbbm{E}\left[\frac{B^0_{\tilde{k} + 2:N}}{N - (\tilde{k} + 1)} \;\; || \;\; B^0_{N+1} \geq c\right]} + 1 
\end{align}
for any collection of alternative 
hypothesis distributions $\{\mathbbm{P}^1_i\}_{i \in \mathcal{H}_1}$.
\end{restatable}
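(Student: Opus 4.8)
The plan is to analyze $BH_q$ through the excess walk $S_i := B^{\mathcal{N}}_{1:i} - i$, $i = 0, 1, \ldots, N$, whose increments $S_i - S_{i-1} = B^{\mathcal{N}}_i - 1 \geq -1$ are bounded below by $-1$ because a bin load is nonnegative. In this language $\tilde{k}$ is the last index with $S_i = 0$ and $\tilde{k}_{+c}$ the last index with $S_i = -c$; on the conditioning event $\{B^0_{N+1}\geq c\}$ one has $S_N = -B^{\mathcal{N}}_{N+1} \leq -c$, so both records exist and $\tilde{k}_{+c}\geq \tilde{k}+c$. First I would extract two deterministic consequences of the step bound. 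Since $\tilde{k}$ is the \emph{last} zero and a walk with steps $\geq -1$ cannot reach a negative level without first touching every intermediate one, the walk can never return to $0$ after $\tilde{k}$; hence $S_{\tilde{k}+1} = -1$, which forces bin $\tilde{k}+1$ to be empty and in particular $B^0_{\tilde{k}+1} = 0$. Comparing levels at $\tilde{k}+1$ and $\tilde{k}_{+c}$ over the $\Delta - 1$ intervening bins, where $\Delta := \tilde{k}_{+c} - \tilde{k}$, gives $B^{\mathcal{N}}_{\tilde{k}+2:\tilde{k}_{+c}} = (\Delta - 1) - (c-1) = \Delta - c$.

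Splitting this total load into its null and alternative parts and discarding the nonnegative alternative load yields the pointwise inequality $\Delta \geq c + B^0_{\tilde{k}+2:\tilde{k}_{+c}}$, valid on every sample path and for every family $\{\mathbbm{P}^1_i\}_{i\in\mathcal{H}_1}$. Writing $\mathbbm{E}$ for $\mathbbm{E}[\,\cdot\mid B^0_{N+1}\geq c]$ and $\rho := \mathbbm{E}[\,B^0_{\tilde{k}+2:N}/(N-(\tilde{k}+1))\,]$ for the expected null density over the whole untruncated tail, the theorem then follows once I establish the matching density estimate $\mathbbm{E}[\,B^0_{\tilde{k}+2:\tilde{k}_{+c}}\,] \geq \rho\,\mathbbm{E}[\,\Delta - 1\,]$: substituting it into the averaged pointwise bound gives the self-referential inequality $\mathbbm{E}[\Delta] \geq c + \rho(\mathbbm{E}[\Delta] - 1)$, and solving for $\mathbbm{E}[\Delta]$ (using $\rho < 1$) produces $\mathbbm{E}[\Delta] \geq (c-\rho)/(1-\rho) = (c-1)/(1-\rho) + 1$, which is exactly \eqref{eq:: LowerBoundRejectionCountIncrease}.

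The density estimate is where the real work lies, and I expect it to be the main obstacle. The difficulty is that $\tilde{k}$ and $\tilde{k}_{+c}$ are themselves determined by the null placements, so once we condition on these records the tail nulls are \emph{not} uniformly spread: conditioning on $\tilde{k} = k$ forces the tail excursion to stay strictly below $0$ after bin $k$, a generalized-ballot constraint that biases where the nulls fall. To control this I would use the backward filtration $\mathcal{F}$ and the null-density martingale $B^0_{1:i}/i$ isolated in Section \ref{sec:: BallsIntoBins}. Since $B^0_i - B^0_{1:i}/i$ is a backward martingale difference and both $\tilde{k}$ and $\tilde{k}_{+c}$ are $\mathcal{F}$-stopping times (while bin $\tilde{k}+1$ contributes no nulls), the optional stopping theorem rewrites $\mathbbm{E}[\,B^0_{\tilde{k}+2:\tilde{k}_{+c}}\,]$ as the expected sum of the one-step null-density compensators accumulated across the bins of the increase region, converting a statement about where individual nulls land into one about averaged densities. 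The remaining, delicate task is to compare these compensators against the tail density $\rho$ in the statement: this requires exploiting the martingale identity $\mathbbm{E}[\,B^0_{1:i}/i \mid \mathcal{F}_\ell\,] = B^0_{1:\ell}/\ell$ to transport densities from the stopped head back to the untruncated tail, while accounting for the positive correlation between a heavier tail and a larger increase $\Delta$. Throughout I would discharge the mild integrability needed for optional stopping, noting that every variable in sight is bounded by $N$, and keep all expectations conditioned on $\{B^0_{N+1}\geq c\}$ so the normalization matches the claim.
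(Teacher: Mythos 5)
Your deterministic skeleton is correct and coincides with the paper's: with steps of the walk $S_i = B^{\mathcal{N}}_{1:i}-i$ bounded below by $-1$, one gets $S_{\tilde{k}+1}=-1$ (the paper's Lemma~\ref{lemma::tildek+1}), bin $\tilde{k}+1$ is empty, and $B^{\mathcal{N}}_{\tilde{k}+2:\tilde{k}_{+c}}=\Delta-c$, whence $\Delta \geq c + B^0_{\tilde{k}+2:\tilde{k}_{+c}}$ pointwise. But the entire proof then hangs on the ``density estimate'' $\mathbbm{E}\bigl[B^0_{\tilde{k}+2:\tilde{k}_{+c}}\bigr] \geq \rho\,\mathbbm{E}[\Delta-1]$, which you do not prove --- you flag it yourself as the main obstacle --- and which is essentially the theorem restated rather than a lemma en route to it. Your sketch does not close it, for two concrete reasons. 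First, optional stopping on the null-density martingale produces an identity between \emph{expected ratios} (the expectation of $B^0_{\tilde{k}+2:\tau}/(\tau-\tilde{k}-1)$ at a stopping time $\tau$ equals the expected tail density), not the product-form bound $\mathbbm{E}[\text{numerator}] \geq \rho\cdot\mathbbm{E}[\text{denominator}]$ you need; bridging the two requires a positive-association argument that you only name. Second, at your chosen time $\tilde{k}_{+c}$ that association has no evident proof: since alternative p-values can pad the increase region, the only pointwise relation is the \emph{upper} bound $B^0_{\tilde{k}+2:\tilde{k}_{+c}} \leq \Delta-c$, so the null density of the region is not a monotone function of $\Delta$. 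There is also a conditioning gap: the post-$\tilde{k}$ martingale is only defined after fixing a realization of $(\tilde{k}, B^1_{1:\tilde{k}})$, and your single unconditional $\rho$ conflates the conditional tail densities with their average; passing from the conditional statement back to \eqref{eq:: LowerBoundRejectionCountIncrease} needs a further convexity step you never supply.

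The paper repairs exactly these points by introducing the auxiliary \emph{null-only} stopping time $\tilde{k}^0_{+c} := \max\{i \in [c,N]_{\mathbbm{Z}} : B^0_{1:i} = i - (B^1_{1:\tilde{k}}+c)\}$, shown to satisfy $\tilde{k}+1 < \tilde{k}^0_{+c} \leq \tilde{k}_{+c}$. At this time the identity $B^0_{\tilde{k}+2:\tilde{k}^0_{+c}} = \tilde{k}^0_{+c}-(\tilde{k}+c)$ holds with \emph{equality}, so the stopped ratio is the deterministic function $1 - \frac{c-1}{\tilde{k}^0_{+c}-(\tilde{k}+1)}$ of the stopping time alone; optional stopping (under the measure conditioned on $(\tilde{k}, B^1_{1:\tilde{k}})$ and $[B^0_{N+1}\geq c]$) equates its expectation with the tail density, and then two applications of Jensen's inequality --- first $\mathbbm{E}[1/X] \geq 1/\mathbbm{E}[X]$ to extract $\mathbbm{E}[\tilde{k}^0_{+c}-\tilde{k}]$, then convexity of $x \mapsto \frac{c-1}{1-x}$ to remove the conditioning on $(\tilde{k}, B^1_{1:\tilde{k}})$ --- deliver the bound, with $\tilde{k}_{+c}\geq \tilde{k}^0_{+c}$ finishing. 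If you replace $\tilde{k}_{+c}$ by $\tilde{k}^0_{+c}$ in your density estimate, the pointwise identity makes the ratio-versus-length monotonicity automatic and your plan can be completed along these lines; as written, however, the proposal has a genuine gap at its pivotal step.
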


INCREASE-c runs as follows:
\begin{enumerate}
    \item IF $B^0_{N+1} \geq c$, then move the largest $c$ (ties broken arbitrarily) in the (N+1)-th bin to bin $\tilde{k}_{+c}$.
    \item ELSE leave the 
    p-values
    unperturbed.
\end{enumerate}
It in fact suffices for the $c-$many 
p-values
to be placed in any bin $i \leq \tilde{k}_{+c}$. We remark that since an oblivious adversary cannot discern null-drawn from alternative-drawn in the collection 
$p$, INCREASE-c as written is unimplementable in such a case. Hence, for the oblivious adversary, we modify INCREASE-c's criterion to $B^\mathcal{N}_{N+1} \geq c$ and have the oblivious adversary now take the $c-$many 
p-values uniformly at random from among the p-values in the $(N+1)$-th bin. Intuitively, this modification for the oblivious adversary should yield nearly $c$ null p-values being moved (on average) just as in the non-oblivious case, assuming the proportion of nulls among the $B^\mathcal{N}_{N+1}$ - many p-values is high, as a typically large $\pi_0$ would entail.

We conclude this section with a characterization of the average increase in FDR, denoted $\Delta_c$ that INCREASE-c induces.
\begin{restatable}{theorem}{LowerBound}\label{theorem:: LowerBound} Given $c\geq 1,$ let 
$p_{+c}$
denote the perturbed form of 
$p$
that INCREASE-c produces.
Then the adversarially-adjusted FDR induced by INCREASE-c is
    \begin{align*} \label{eq:: LowerBoundByINCREASE-1}
    \mathbbm{E}FDP[BH_q; p_{+c}] = \mathbbm{E}FDP[BH_q; p] + \Delta_c,
    \end{align*}
    for any collection of alternative 
distributions $\{\mathbbm{P}^1_i\}_{i \in \mathcal{H}_1}$,
    where 
    \begin{equation} \label{eq:: Delta}
        \Delta_c:= \mathbbm{E} \left[\frac{c}{\tilde{k}_{+c}} ; B^0_{N+1} \geq c \right].
    \end{equation}

\end{restatable}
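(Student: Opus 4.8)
The plan is to compute both false detection proportions directly in the balls-into-bins picture and to recognize their difference as $\Delta_c$ plus a martingale-invariant term. Set $A:=\{B^0_{N+1}\geq c\}$, and note $A\in\mathcal{F}_N$ since $\mathcal{F}_N=\sigma(B^\mathcal{N}_{N+1},B^0_{N+1})$. On $A^c$ the procedure leaves $p$ unperturbed, so $FDP[BH_q;p_{+c}]=FDP[BH_q;p]$ and these contributions cancel. On $A$, by the fact recorded just before the theorem, relocating $c$ null p-values from bin $N+1$ into bin $\tilde{k}_{+c}$ makes $\tilde{k}_{+c}$ the new rejection count; the rejected region then holds $\tilde{k}_{+c}$ balls, whose nulls are the original $B^0_{1:\tilde{k}_{+c}}$ together with the $c$ relocated (null) balls. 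Hence on $A$,
\[
FDP[BH_q;p_{+c}]=\frac{B^0_{1:\tilde{k}_{+c}}+c}{\tilde{k}_{+c}}=\frac{B^0_{1:\tilde{k}_{+c}}}{\tilde{k}_{+c}}+\frac{c}{\tilde{k}_{+c}}.
\]
Taking expectations and peeling off the last summand, which is exactly $\Delta_c=\mathbbm{E}[\tfrac{c}{\tilde{k}_{+c}};A]$, reduces the claim to the single identity
\[
\mathbbm{E}\!\left[\frac{B^0_{1:\tilde{k}_{+c}}}{\tilde{k}_{+c}};A\right]=\mathbbm{E}\!\left[\frac{B^0_{1:\tilde{k}}}{\tilde{k}\vee 1};A\right].
\]

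To prove this identity I would appeal to the reverse martingale $\tfrac{B^0_{1:N}}{N},\ldots,\tfrac{B^0_{1:1}}{1}$ and the optional stopping theorem. First I would rewrite the right-hand integrand as a stopped martingale value: $\tfrac{B^0_{1:\tilde{k}}}{\tilde{k}\vee 1}=\tfrac{B^0_{1:(\tilde{k}\vee 1)}}{\tilde{k}\vee 1}$ almost surely, the two agreeing on $\{\tilde{k}\geq 1\}$ and both vanishing on $\{\tilde{k}=0\}$ because there $B^\mathcal{N}_1=0$ (hence $B^0_1=0$): indeed $\tilde{k}=0$ forbids $B^\mathcal{N}_1=1$, while $B^\mathcal{N}_1\geq 2$ would make the excess $B^\mathcal{N}_{1:i}-i$ start positive and end at $-B^\mathcal{N}_{N+1}\leq 0$ in downward steps of at most one, forcing the excess to vanish at some $i\geq 1$ and contradicting $\tilde{k}=0$. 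Thus both integrands are values of the martingale stopped at $\tilde{k}\vee 1$ and at $\tilde{k}_{+c}$, each a stopping time for $\mathcal{F}$ valued in $\{1,\ldots,N\}$ (on $A$ one has $\tilde{k}_{+c}\geq c\geq 1$, and off $A$ I may set it to $1$ without harm since $A\in\mathcal{F}_N\subseteq\mathcal{F}_i$). Conditioning on the initial $\sigma$-algebra $\mathcal{F}_N$, optional stopping yields $\mathbbm{E}[\tfrac{B^0_{1:\tilde{k}_{+c}}}{\tilde{k}_{+c}}\mid\mathcal{F}_N]=\tfrac{B^0_{1:N}}{N}$ and the same for $\tilde{k}\vee 1$; multiplying by $\mathbbm{1}_A$ (legitimate since $A\in\mathcal{F}_N$) and taking expectations collapses both sides of the identity to $\mathbbm{E}[\tfrac{B^0_{1:N}}{N};A]$, establishing it.

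I expect the crux to be the careful measure-theoretic bookkeeping around optional stopping rather than any sharp estimate. Concretely, I must confirm that $\tilde{k}\vee 1$ and $\tilde{k}_{+c}$ are genuine stopping times for the reverse filtration $\mathcal{F}$ and that they never take the value $0$, so that $B^0_{1:i}/i$ is evaluated at a legitimate index; boundedness of both stopping times in the finite set $\{1,\ldots,N\}$ then makes the conditional form of the optional stopping theorem routine. The only genuinely non-mechanical point is the almost-sure reduction $FDP[BH_q;p]=B^0_{1:(\tilde{k}\vee 1)}/(\tilde{k}\vee 1)$ via the implication $\{\tilde{k}=0\}\subseteq\{B^\mathcal{N}_1=0\}$; once that is in hand, the whole argument is driven by the observation that the triggering event $A$ is $\mathcal{F}_N$-measurable, which is precisely what lets the reverse martingale erase the difference between the perturbed and unperturbed null counts.
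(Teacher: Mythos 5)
Your proposal is correct and follows essentially the same route as the paper's proof: decompose over the event $[B^0_{N+1}\geq c]$, use the pointwise identity $FDP[BH_q;p_{+c}]=\bigl(B^0_{1:\tilde{k}_{+c}}+c\bigr)/\tilde{k}_{+c}$ on that event to peel off $\Delta_c$, and invoke optional stopping on the backward martingale $B^0_{1:N}/N,\ldots,B^0_{1:1}/1$ to collapse the remaining stopped-martingale terms to $\mathbbm{E}\left[B^0_{1:N}/N\right]$. If anything, you are slightly more explicit than the paper on two points it leaves implicit --- the verification that $\{\tilde{k}=0\}\subseteq\{B^{\mathcal{N}}_1=0\}$ (so that $FDP[BH_q;p]=B^0_{1:(\tilde{k}\vee 1)}/(\tilde{k}\vee 1)$ almost surely) and the $\mathcal{F}_N$-measurability of the triggering event --- which is a welcome tightening rather than a departure.
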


In Section \ref{sec:: LowerBounds} to follow, we provide analytical lower bounds for $\Delta_c$ as part of a discussion on BH's adversarial robustness. Section \ref{sec:: Experiments} presents computational experiments (e.g. Table \ref{tab:: INCREASE-C versus Mu1}). 
We remark that INCREASE-c is not optimal for all instances of \ref{eq:: c-PertProb}; indeed, for $c=1$, we present a provably optimal algorithm MOVE-1 in Appendix Section \ref{sec:: AppendixMove1}, which in contrast to INCREASE-1 sometimes induces a reduced rejection count. However, INCREASE-c remains a formidable adversarial procedure, as the results of Section \ref{sec:: Experiments} demonstrate on not only i.i.d. p-values but also PRDS conformal p-values.

\section{Theoretical Analysis: Performance Guarantees and Insights into Adversarial Robustness} \label{sec:: LowerBounds}
BH's FDR control -- Lemma \ref{lemma::BH} -- is (distributionally) robust in the sense that it holds no matter the alternative distributions $\{\mathbbm{P}^1_i\}_{i \in \mathcal{H}_1}$. However, as Theorem \ref{theorem:: LowerBound} indicates, the degree to which this control can withstand data perturbations at test time, i.e., its adversarial robustness, very much depends on $\{\mathbbm{P}^1_i\}_{i \in \mathcal{H}_1}$. 

Recalling Lemma \ref{lemma::BH}, we may assume without loss of generality that no alternative distribution $\mathbbm{P}_i^1$ stochastically dominates $U(0,1)$ (equiv., $\mathbbm{P}_i^1 \succcurlyeq U(0,1)$). That being said, the ``degree" to which the alternative distributions $\{\mathbbm{P}^1_i\}_{i \in \mathcal{H}_1}$ are (stoch.) dominated by the null distribution $U(0,1)$ (equiv., $\mathbbm{P}_i^1 \preccurlyeq U(0,1)$) is critical. We briefly preview two regimes of special interest for which each of the next two subsections cover.

\underline{{\bf High sub-uniformity:}} When the alternatives are sub-uniform $\mathbbm{P}^1_i \preccurlyeq U(0,1)$ for all $i \in \mathcal{H}_1,$ and highly so, such that for all $i \in \mathcal{H}_1$ it holds that $\mathbbm{P}_i^1\left(p_i < \epsilon \right) \approx 1$ for some small $\epsilon > 0$, then it follows that $\tilde{k}$ is large and $B_{1:\tilde{k}}$ should contain most alternative p-values. Consequently, in order for INCREAES-c to induce any sizeable increase to the FDR, the adversary will need to expand the BH rejection region significantly so as to introduce a commensurate number of nulls. Table \ref{tab:: INCREASE-C versus Mu1} indicates $c$ may need to be quite large to make a dent in FDR control. This message is made more precise in Theorem \ref{proposition:: RejectionCountLargeMuOne}. 

\underline{\bf {Low sub-uniformity:}} As we will see, when the alternative p-values are barely dominated by $U(0,1)$, $\Delta_c$ can be rather large. In fact, in the special case that there is no dominance such that $\mathbbm{P}^1_i = U(0,1)$ for all $i \in \mathcal{H}_1,$ a strikingly vulnerable state occurs with high probability. Indeed, in this case where nulls and alternatives are virtually indistinguishable, $BH_q$ (in fact any $\mathcal{A}$) admits an FDR of $\pi_0$ whenever any rejections are made (i.e., $\mathbbm{E}\left[FDP[BH_q; p] || \tilde{k} \geq 1\right] = \pi_0$) so that $BH_q$ accordingly compensates by making no rejections with high probability ($\mathbbm{P}\left(\tilde{k} = 0\right) = 1-q$), which follows by the distributional robust control \eqref{eq:bh1} from Lemma \ref{lemma::BH}. But those times when $\tilde{k} = 0$ is precisely when INCREASE-c's simultaneous expansion of the rejection region and injection of nulls into this region is most damaging. That this event and other similarly vulnerable events occurs with high probability is the fault of the distributional robustness. This message is made rigorous in the forthcoming Theorem \ref{thm:: Mu1EqualsZeroBound}.

\subsection{Case of High Sub-Uniformity in Alternatives $\{\mathbbm{P}_i^1\}_{i \in \mathcal{H}_1}$}
If $\mathbbm{P}^1_i \preccurlyeq U(0,1)$ for all $i \in \mathcal{H}_1,$ with $\mathbbm{P}_i^1\left(p_i < \epsilon \right) \approx 1$ for some small $\epsilon > 0$, then it is clear that the number
of alternatives rejected by $BH_q$ should be nearly the maximum number $N_1$ of correct rejections possible (i.e., $B^1_{1:\tilde{k}} \approx N_1$) with high probability, limiting any potential impact of INCREASE-c.

The following bounds are formulated to elaborate on such dynamics in this case of large separation between alternatives and nulls, for which the event $[B^1_{1:c} = N_1]$ has probability close to 1.
\begin{restatable}{theorem}{RejectionCountLargeMuOne}\label{proposition:: RejectionCountLargeMuOne}
If $c \geq 1,$ then
\begin{align*}
    \Delta_c \leq \mathbbm{P}\left(B^1_{1:c} = N_1\right) \mathbbm{E}\left[\frac{c}{c + N_1 + B^0_{1:N_1 + c}}\|B^0_{N+1} \geq c\right] + 1 - \mathbbm{P}\left(B^1_{1:c} = N_1\right)
\end{align*}
and
\begin{align} \label{eq:: LowerBoundKtildePlusC}
\mathbbm{E}\left[\tilde{k}_{+c} \| B^0_{N+1} \geq c\right] &\geq \frac{(N_1 + c) \cdot \mathbbm{P}\left(B^1_{1:c} = N_1\right)  }{1 - \mathbbm{E}\left[\frac{B^0_{1:N}}{N} \| B^0_{1:N} \leq N_0 - c \right] }.
\end{align}
\end{restatable}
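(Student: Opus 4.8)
For the lower bound on $\mathbbm{E}[\tilde k_{+c}\mid A]$ I expect the factor $\mathbbm{P}(B^1_{1:c}=N_1)$ to the \emph{first} power (rather than squared) to be the delicate point, so the plan below is built around isolating where that factor is preserved exactly.

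Throughout I would abbreviate $A:=\{B^0_{N+1}\geq c\}=\{B^0_{1:N}\leq N_0-c\}$ (the second form rewrites the conditioning event in the denominator) and $E:=\{B^1_{1:c}=N_1\}$, the event that all $N_1$ alternative p-values land in bins $1,\dots,c$. These are independent, since $E$ is a function of the alternatives alone while $A$, and every null-load $B^0_{1:i}$, is a function of the nulls alone. The deterministic backbone of both bounds is the following fact on $A\cap E$: because all alternatives sit in bins $1,\dots,c$ and $\tilde k_{+c}\geq c$, we have $B^1_{1:i}=N_1$ for every $i\geq c$, so the defining relation $B^{\mathcal N}_{1:i}=i-c$ reads $B^0_{1:i}=i-c-N_1$. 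Reading $i\mapsto B^0_{1:i}-(i-c-N_1)$ as a path that equals $B^0_{1:c+N_1}\geq 0$ at $i=c+N_1$, can decrease by at most one per step, and is $\leq 0$ at $i=N$ on $A$, I conclude it hits $0$, and the earliest (hence the largest) such index satisfies $\tilde k_{+c}\geq c+N_1+B^0_{1:c+N_1}$; in particular $B^1_{1:\tilde k_{+c}}=N_1$ there.

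For the upper bound on $\Delta_c$ I would split $\Delta_c=\mathbbm{E}[\,c/\tilde k_{+c}\,;A\cap E\,]+\mathbbm{E}[\,c/\tilde k_{+c}\,;A\cap E^c\,]$. On $A$ one has $\tilde k_{+c}\geq c$, hence $c/\tilde k_{+c}\leq 1$, so the second term is at most $\mathbbm{P}(E^c)=1-\mathbbm{P}(B^1_{1:c}=N_1)$. For the first term I substitute the deterministic bound to get $c/\tilde k_{+c}\leq c/(c+N_1+B^0_{1:c+N_1})$; since the integrand is now a function of the nulls times $\mathbbm{1}_E$, independence factors out $\mathbbm{P}(E)$ and passing from $\mathbbm{E}[\,\cdot\,;A]$ to the conditional expectation given $A$ costs only a factor $\mathbbm{P}(A)\leq 1$. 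Summing the two pieces yields the stated inequality.

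For the lower bound, write $D:=1-\mathbbm{E}[B^0_{1:N}/N\mid A]$ for the denominator in the statement. On $A\cap E$ the relation $B^0_{1:\tilde k_{+c}}=\tilde k_{+c}-c-N_1$ gives $(c+N_1)/\tilde k_{+c}=1-B^0_{1:\tilde k_{+c}}/\tilde k_{+c}$; taking $\mathbbm{E}[\,\cdot\mid A,E]$ and applying Jensen's inequality to the convex map $x\mapsto 1/x$ produces $\mathbbm{E}[\tilde k_{+c}\mid A,E]\geq (c+N_1)\big/\bigl(1-\mathbbm{E}[B^0_{1:\tilde k_{+c}}/\tilde k_{+c}\mid A,E]\bigr)$. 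The crux is to evaluate the inner conditional expectation, and here I would invoke the null-fraction martingale $\{B^0_{1:i}/i\}$ together with optional stopping, but carried out \emph{conditionally on the entire alternative configuration}: once the alternatives are fixed, $\tilde k_{+c}$ is a stopping time for the reverse filtration generated by the nulls alone, its stopped mean equals $\mathbbm{E}[B^0_{1:N}/N\mid A]$ for every configuration, and this common value is therefore unchanged by the further conditioning on $E$. Substituting $\mathbbm{E}[B^0_{1:\tilde k_{+c}}/\tilde k_{+c}\mid A,E]=\mathbbm{E}[B^0_{1:N}/N\mid A]=1-D$ collapses the denominator to $D$, so $\mathbbm{E}[\tilde k_{+c}\mid A,E]\geq (c+N_1)/D$, and finally $\mathbbm{E}[\tilde k_{+c}\mid A]\geq \mathbbm{P}(E\mid A)\,\mathbbm{E}[\tilde k_{+c}\mid A,E]=\mathbbm{P}(E)(c+N_1)/D$.

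I expect this last optional-stopping step to be the main obstacle. A naive application of optional stopping or Cauchy--Schwarz after conditioning on $E$ only yields a factor $\mathbbm{P}(E)^2$ in the numerator; the first power in the statement hinges precisely on the observation that the stopped mean of the null-fraction martingale is \emph{invariant} across alternative configurations, so that conditioning on $E$ leaves it exactly equal to $\mathbbm{E}[B^0_{1:N}/N\mid A]$ rather than merely bounding it. Verifying that $\tilde k_{+c}$ remains a genuine (bounded) stopping time for the pure-null reverse filtration once the alternatives are frozen, and that $A$ lies in the initial $\sigma$-algebra, is the technical checkpoint that makes this invariance rigorous.
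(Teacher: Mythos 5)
Your proposal is correct and takes essentially the same route as the paper's proof: the same split over the event $[B^1_{1:c}=N_1]$ with the complement contributing $1-\mathbbm{P}(B^1_{1:c}=N_1)$, the same deterministic bound $\tilde{k}_{+c}\geq c+N_1+B^0_{1:N_1+c}$ on that event, and the same optional-stopping-plus-Jensen argument on the null-fraction martingale, preserving the factor $\mathbbm{P}(B^1_{1:c}=N_1)$ to the first power by decoupling the stopping time from the alternatives. The only difference is presentational: the paper achieves the decoupling by introducing the null-only stopping time $\tilde{k}^0_{+c}:=\max\{i\in[0,N]_{\mathbbm{Z}}: B^0_{1:i}=i-(N_1+c)\}$, which coincides with $\tilde{k}_{+c}$ on $[B^1_{1:c}=N_1]$ and is independent of the alternatives, whereas you freeze the alternative configuration and apply optional stopping conditionally, observing that the stopped mean is invariant across configurations --- two phrasings of the same idea.
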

In words, for fixed $c,$ as the alternative distributions concentrate more and more on $0$, it follows that $\mathbbm{P}\left(B^1_{1:c} = N_1\right)\uparrow 1,$ so that the effect $\Delta_c$ of INCREASE-c on BH's FDR is dampened. And this occurs despite the fact that the increase $\tilde{k}_{+c} - \tilde{k}$ in rejection count produced by INCREASE-c consists of mostly the introduction of nulls, and tends to a magnification of $(N_1 + c)$ by at least a factor of the inverse of $1 - \mathbbm{E}\left[\frac{B^0_{1:N}}{N} \| B^0_{1:N} \leq N_0 - c \right],$ 
which is straightforward to compute since $B^0_{1:N} \sim Binom(N_0, q)$. We refer the reader to Section \ref{sec:: Experiments}'s Table \ref{tab:: INCREASE-C versus Mu1} for simulations illustrating the above. 

\subsection{Case of Low Sub-Uniformity in Alternatives $\{\mathbbm{P}_i^1\}_{i \in \mathcal{H}_1}$} \label{sec:: SmallMuOneLowerBounds}
In the study of this regime, we aim to demonstrate that the adversarial increase $\Delta_c$ can be rather large. We provide a lower bound $L_{c}$ on $\Delta_c$ that will be a function of parameters $q, N, N_0$, as well as the alternative distributions $\{\mathbbm{P}_i^1\}_{i \in \mathcal{H}_1}$. 
 

\subsubsection{Lower Bounding $\Delta_c$}
In view of $\ref{eq:: Delta}$, the event $[\tilde{k}_{+c} = c]$ is clearly of significance in the computation of $\Delta_c$. In words, this event describes the case that the $BH_q$ rejection region captures nothing, and yet when the adversary successfully executes INCREASE-c the rejection will now capture only nulls - generating a false detection proportion of 1. Hence, we lower bound the adjustment $\Delta_c$ of Theorem \ref{theorem:: LowerBound} by lower-bounding the probability of this event. 

Our strategy: (1) first, characterize this probability under the special case that $\mathbbm{P}^1_i = U(0,1)$ for all $i \in \mathcal{H}_1$; (2) second, to handle when $\mathbbm{P}^1_i \preccurlyeq U(0,1)$ for some $i \in \mathcal{H}_1$, we translate the KL divergence of the resulting discrete, bin-assignment distributions into a bound via Pinsker's Inequality. 

The key to the first step will be to recognize that when $\mathbbm{P}^1_i = U(0,1)$ for all $i \in \mathcal{H}_1$, the vector of total loads $(B^\mathcal{N}_{1}, \ldots, B^\mathcal{N}_N)$ is exchangeable (i.e., its law is invariant under permutations), given $B^\mathcal{N}_{1:N}$. Indeed, exchangeability, combined with the generalized Ballot Theorem of \cite{takacs1962generalization} yields the following result:  
\begin{restatable}{corollary}{BHBallotTheorem} \label{corollary:: BH_BallotTheorem}
     Let $n\geq 1$, $p \in [0,1],$ and $x$ a non-negative integer such that  $0 \leq x \leq n$. If $\tilde{B} = (\tilde{B}_1, \ldots, \tilde{B}_n) \sim Multinomial\Big(x, (p, \ldots, p)\Big)$, then $\mathbbm{P}_{\tilde{B}}\left(\cap_{r = 1}^n \left[\sum_{i = 1}^r \tilde{B}_{i} < r\right]\right) = 1 - \frac{x}{n}.$
\end{restatable}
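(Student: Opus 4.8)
The plan is to recognize this as a direct instance of the generalized ballot theorem of \cite{takacs1962generalization} and to supply the two ingredients its hypotheses require: cyclic interchangeability and a deterministic total. First I would address the (deliberately loose) notation $(p,\ldots,p)$: a valid $n$-category multinomial forces the common cell probability to be $1/n$, but what the argument genuinely uses is only that all $n$ cells carry \emph{equal} probability. Under equal cell probabilities the law of $\tilde B = (\tilde B_1,\ldots,\tilde B_n)$ is invariant under every coordinate permutation, in particular under cyclic shifts, and the total $\tilde B_1 + \cdots + \tilde B_n = x$ is constant almost surely. Hence $\tilde B$ is a vector of cyclically interchangeable, non-negative integer-valued random variables with fixed sum $x \leq n$ --- precisely the setting of the generalized ballot theorem, and the eventual answer will be seen to be independent of the common probability.

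Second, I would apply the theorem. In the form I would invoke, for cyclically interchangeable non-negative integers $\nu_1,\ldots,\nu_n$ with $\sum_{i=1}^n \nu_i = k \leq n$ one has $\mathbbm{P}\bigl(\sum_{i=1}^r \nu_i < r \text{ for all } r = 1,\ldots,n\bigr) = 1 - k/n$. Matching $\nu_i = \tilde B_i$ and $k = x$, the event $\cap_{r=1}^n [\sum_{i=1}^r \tilde B_i < r]$ is exactly the ballot event, and the claimed value $1 - x/n$ follows. The remaining bookkeeping is to confirm the inequality direction (note $\sum_{i=1}^r \tilde B_i < r$ is the same as $\sum_{i=1}^r \tilde B_i \leq r-1$) and to check the two boundary cases: $x = 0$ gives probability $1$, while $x = n$ gives probability $0$, since the constraint already fails at $r = n$ (it would demand $x < n$); both agree with $1 - x/n$.

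If a self-contained argument is preferred over citing the probabilistic statement, I would instead route through the Dvoretzky--Motzkin cycle lemma. Setting $c_i := 1 - \tilde B_i$, each $c_i$ is an integer $\leq 1$ with $\sum_{i=1}^n c_i = n - x$, and the target event becomes $\{\sum_{i=1}^r c_i > 0 \text{ for all } r\}$. The cycle lemma asserts that, for any fixed multiset of such values with positive total $n - x$, exactly $n - x$ of the $n$ cyclic rotations make every partial sum positive. Conditioning on the unordered multiset of loads, exchangeability makes all $n$ cyclic rotations equally likely, so the conditional probability is $(n-x)/n$; since this does not depend on the multiset, integrating over it yields the same unconditional value $1 - x/n$. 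The main obstacle here is not any computation but the reduction itself --- translating the strict partial-sum inequalities into the lemma's hypotheses and justifying that conditioning on the multiset induces a uniform law over cyclic rotations --- after which the count $n - x$ is immediate.
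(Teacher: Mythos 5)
Your primary argument is correct and is essentially the paper's own proof: the paper likewise reduces the claim to Theorem 1 of Tak\'acs (1962), using that the equal-probability multinomial (which indeed forces $p = 1/n$, as you note) is permutation-invariant, so that $\tilde{B}_{\tilde{\pi}} \overset{D}{=} \tilde{B}$ for an independent uniform permutation $\tilde{\pi}$; conditioning on the realized loads, whose sum is almost surely $x$, and integrating out gives $1 - \frac{x}{n}$, exactly your first route in the deterministic-values-plus-random-permutation guise. Your Dvoretzky--Motzkin cycle-lemma variant is a sound self-contained alternative, but it merely re-proves the Tak\'acs ingredient rather than altering the structure of the argument.
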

Armed with Corollary \ref{corollary:: BH_BallotTheorem}, we can begin to estimate the probability laws of $\tilde{k}$ and $\tilde{k}_{+c}$. 




\begin{restatable}{corollary}{RejectZero}\label{theorem:: RejectZero}
If $\mathbbm{P}_i^1 = U(0,1)$ for all $i \in \mathcal{H}_1$,\\
then
$\mathbbm{P}\left(\tilde{k} = \ell\right) = \left(\frac{1-q}{1 - q + \frac{(N-\ell)q}{N}}\right) \cdot \mathbbm{P}\left(B^\mathcal{N}_{1:\ell} = \ell \right)$ 
for $\ell = 0, \ldots, N,$ where $0^0 = 1$, \\ and $\mathbbm{P}\left(B^\mathcal{N}_{1:\ell} = \ell \right) = {N\choose \ell} \left(\frac{q\ell}{N}\right)^{\ell}(1 - \frac{q\ell}{N})^{N-\ell}$. \\
If $B^0_{N+1} \geq c$, then $\mathbbm{P}\left(\tilde{k}_{+c} = c \;\; || \;\; B^0_{N+1}\right) = (1-\frac{cq}{N})^{N_1}\cdot (1 - \frac{c}{N})^{N_0 - B^0_{N+1}} \left(1 - \frac{N_0 - B^0_{N+1}}{N-c}-\frac{N_1 q}{N-cq}\right)$.
\end{restatable}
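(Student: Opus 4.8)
My plan is to write each probability as a product of an elementary ``emptiness/exact-fill'' factor and a \emph{ballot} factor that I evaluate through Corollary~\ref{corollary:: BH_BallotTheorem}, exploiting that $\mathbbm{P}^1_i = U(0,1)$ makes all $N$ p-values i.i.d.\ $U(0,1)$ and hence the total loads exchangeable. The enabling deterministic reduction is this: on any block of consecutive equiprobable ``inner'' bins carrying $m$ balls, the requirement that the running partial sum of loads never \emph{equals} its position is equivalent to the \emph{strict} ballot requirement that it stays strictly \emph{below} its position, provided $m$ is at most the number of inner bins. I would prove this by tracking $S_r := (\text{balls in the first } r \text{ inner bins}) - r$: it starts at $S_0=0$, its downward steps are exactly $-1$ (a bin of load $0$), and its terminal value equals $m - (\#\text{inner bins}) \le 0$; so a path that avoids $0$ after time $0$ and ends nonpositive can never have gone positive, since a unit descent back toward a nonpositive value must land on $0$. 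Thus ``avoids $0$'' and ``stays strictly negative'' coincide, which is precisely the hypothesis of Corollary~\ref{corollary:: BH_BallotTheorem}.

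For the law of $\tilde{k}$, I condition on $[B^{\mathcal N}_{1:\ell} = \ell]$, whose probability is $\binom{N}{\ell}(\ell q/N)^{\ell}(1-\ell q/N)^{N-\ell}$ because $B^{\mathcal N}_{1:\ell}\sim Binom(N,\ell q/N)$. Given this event, $[\tilde{k}=\ell]$ says no index $i>\ell$ has $B^{\mathcal N}_{1:i}=i$, i.e.\ the $n:=N-\ell$ inner bins $\ell+1,\dots,N$ (carrying the remaining $n$ balls) satisfy the ``$\ne$'' condition, which by the reduction is the strict ballot event. Conditioning further on $B^{\mathcal N}_{N+1}=b$ leaves $n-b$ balls distributed $Multinomial(n-b,(1/n,\dots,1/n))$ over the inner bins, so Corollary~\ref{corollary:: BH_BallotTheorem} gives ballot probability $1-(n-b)/n=b/n$. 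Since, given $[B^{\mathcal N}_{1:\ell}=\ell]$, each remaining ball lands in bin $N+1$ independently with probability $\rho:=(1-q)/(1-\ell q/N)$, averaging gives $\mathbbm{E}[B^{\mathcal N}_{N+1}/n]=\rho$, hence $\mathbbm{P}(\tilde{k}=\ell)=\rho\cdot\mathbbm{P}(B^{\mathcal N}_{1:\ell}=\ell)$; I then rewrite $1-\ell q/N = 1-q+(N-\ell)q/N$ to match the stated form.

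For the law of $\tilde{k}_{+c}$ given $B^0_{N+1}\ge c$, the event $[\tilde{k}_{+c}=c]$ is the intersection of (a) $[B^{\mathcal N}_{1:c}=0]$ and (b) the ``$\ne$'' condition on the $n:=N-c$ inner bins $c+1,\dots,N$. Conditioning on $B^0_{N+1}=\beta$, factor (a) splits by type: each of the $N_0-\beta$ nulls lying in $[0,q)$ avoids the first $c$ bins with probability $1-c/N$, and each of the $N_1$ alternatives avoids $[0,cq/N)$ with probability $1-cq/N$, giving $\mathbbm{P}(\text{(a)})=(1-c/N)^{N_0-\beta}(1-cq/N)^{N_1}$. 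Given (a), every inner null occupies an inner bin and every inner-landing alternative is again uniform over the $n$ inner bins, so conditional on the inner total $M$ the loads are $Multinomial(M,(1/n,\dots,1/n))$ and the reduction plus Corollary~\ref{corollary:: BH_BallotTheorem} yield ballot probability $1-M/n$. Using $\mathbbm{E}[M\mid \text{(a)}]=(N_0-\beta)+N_1\cdot\frac{q(N-c)}{N-cq}$ (nulls contribute deterministically; each alternative lands inner with conditional probability $\frac{q(N-c)/N}{1-cq/N}$) and dividing by $n=N-c$ reproduces $1-\frac{N_0-\beta}{N-c}-\frac{N_1 q}{N-cq}$; multiplying the two factors gives the claim, and the $c=0$ specialization recovers $\mathbbm{P}(\tilde k=0)=1-q$ on average, a useful consistency check.

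I expect the main obstacle to be the reduction step: showing that the stopping-time event is \emph{exactly} (not merely equal in probability to) the strict ballot event, so that Corollary~\ref{corollary:: BH_BallotTheorem} applies verbatim, and verifying that the mixed null/alternative composition of the inner bins is immaterial, i.e.\ that conditional on the inner total the loads are genuinely exchangeable $Multinomial$ with uniform cell probabilities. Everything downstream is bookkeeping: the binomial prefactors, the conditional Bernoulli/Binomial counts, and the identity $1-\ell q/N = 1-q+(N-\ell)q/N$.
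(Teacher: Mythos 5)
Your proposal is correct and takes essentially the same route as the paper's proof: the same factorization of $[\tilde{k}=\ell]$ (resp.\ $[\tilde{k}_{+c}=c]$) into an exact-fill/emptiness factor and a no-later-equality factor, the same reduction of the latter to the strict ballot event (which the paper imports from Lemma \ref{lemma::tildek+1} while you reprove it via the path argument on $S_r$), the same conditional $Multinomial$ structure feeding Corollary \ref{corollary:: BH_BallotTheorem}, and the same use of linearity of $1-M/n$ to integrate out the inner count. The only cosmetic difference is that you condition on $B^{\mathcal{N}}_{N+1}$ (resp.\ split factor (a) by null/alternative type) where the paper conditions on the equivalent complementary totals $B^{\mathcal{N}}_{\ell+1:N}$ (resp.\ $B^0_{N+1}$ and $B^1_{c+1:N}$).
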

Next, we carry out the second step of our plan. Towards this, we make the following assumption.
\begin{assumption} \label{assumption}
    Suppose the alternative p-values are independent and identically distributed, with common distribution $\mathbbm{P}^1$, i.e.,
    $\{p_i\}_{i \in \mathcal{H}_1} \overset{iid}{\sim} \mathbbm{P}^1$. We write $\delta:= (1- q) - \mathbbm{P}^1\left(p_i \in B_{N+1}\right)$, and $\delta_j:= \mathbbm{P}^1\left(p_i \in B_{j}\right) - \frac{q}{N}$, and we assume that $\mathbbm{P}^1\left(p_i \in B_j \right) > 0 $ for all $j \in [N]$, as well as $\mathbbm{P}^1\left(p_i \in B_{\ell: N}\right) > 0$ for arbitrary $\ell < N$.
\end{assumption}
This assumption not only reduces the notational burden (otherwise documenting $N_1$ many alternative distributions) but it also allows the leveraging of KL divergences between the different bin assignment distributions implied by the alternative $\mathbbm{P}^1$ versus the null $U(0,1)$. 

\begin{restatable}{theorem}{MuOneEqualsZeroBound}
    \label{thm:: Mu1EqualsZeroBound}
Suppose Assumption \ref{assumption}. Then
    \begin{align*}
   \Delta_c \geq L_{c}(q,N,N_0, \mathbbm{P}^1) \nonumber := \left(1-\frac{cq}{N} - \delta_{1:c}\right)^{N_1}
   \left[
    \Bigl(1-\frac{c}{N}\right)^{N_0}\left(1 - \pi_c - V_c \Bigr) M_{c} + Z_c
   \right]
    \end{align*}
    where  
   $\pi_c(q, N, N_0, \mathbbm{P}^1): = \frac{N_0 + \mathbbm{E}\left[B^1_{c+1:N}||B^1_{1:c} = 0\right]}{N-c}, ~~~$ $\mathbbm{E}\left[B^1_{c+1:N}||B^1_{1:c} = 0\right] = N_1 \frac{(N-c)q + N\delta_{c+1:N}}{N-cq - N\delta_{1:c}},$\\
    $V_c(q, N, N_0, \mathbbm{P}^1):= \sqrt{\frac{ln 2}{2} \mathbbm{E}\left[B^1_{c+1:N}||B^1_{1:c} = 0\right]D_{KL}(\mathbbm{P}^1,c)},$ \\ $M_c(q, N, N_0):= \frac{N-cq}{N-c} - \mathbbm{E}\left[\left(1 - \frac{c}{N}\right)^{-B^0_{N+1}}; B^0_{N+1} \leq c - 1\right]$,\\
    $Z_{c}(q, N, N_0) := \mathbbm{P}\left(B^0_{N+1} \geq c\right) \left(1 - \frac{c}{N}\right)^{N_0 - \mathbbm{E}\left[B^0_{N+1} || B^0_{N+1}\geq c\right]} \frac{\mathbbm{E}\left[B^0_{N+1} || B^0_{N+1}\geq c\right]}{N-c}$\\
    $D_{KL}(\mathbbm{P}^1,c):=
    \sum_{j} \frac{1}{N-c}log\left(\frac{q + \delta - \sum_{j=1}^c(q/N + \delta_j)}{(N-c)(q/N + \delta_j)}\right)$
    
\end{restatable}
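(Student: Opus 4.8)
The plan is to lower bound $\Delta_c$ by isolating the single most damaging event, $[\tilde{k}_{+c}=c]$, on which $c/\tilde{k}_{+c}=1$. Starting from the representation $\Delta_c=\mathbbm{E}[\tfrac{c}{\tilde{k}_{+c}};B^0_{N+1}\geq c]$ of Theorem \ref{theorem:: LowerBound} and discarding every other contribution gives $\Delta_c\geq\mathbbm{P}(\tilde{k}_{+c}=c,\,B^0_{N+1}\geq c)$. I would then read this event through the balls-into-bins lens: by the definition of $\tilde{k}_{+c}$, the event $[\tilde{k}_{+c}=c]$ requires the first $c$ bins to be empty ($B^\mathcal{N}_{1:c}=0$) and the partial-load path over bins $c+1,\dots,N$ never to return to the diagonal, i.e. $B^\mathcal{N}_{c+1:c+r}\neq r$ for every $r$. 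Since I only need a lower bound, I would replace this by the strictly-below-diagonal event $B^\mathcal{N}_{c+1:c+r}<r$ for all $r$, which is a subset and, crucially, is exactly the form handled by the generalized Ballot Theorem through Corollary \ref{corollary:: BH_BallotTheorem}. Because nulls and alternatives are independent, I would factor out $\mathbbm{P}(B^1_{1:c}=0)=(1-cq/N-\delta_{1:c})^{N_1}$, which under Assumption \ref{assumption} produces the leading factor of $L_c$.

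With the alternatives kept out of the first $c$ bins, I would condition on $B^0_{N+1}=b$: the $N_0-b$ remaining nulls, being uniform, avoid the first $c$ bins with probability $(1-c/N)^{N_0-b}$ and stay exchangeable. The path probability is then handled in two steps, as the paper's strategy prescribes. First, conditioning further on the bin counts, the Ballot Theorem (Corollary \ref{corollary:: BH_BallotTheorem}, with its specialization Corollary \ref{theorem:: RejectZero}) evaluates the strictly-below-diagonal probability as $1$ minus the load fraction $x/(N-c)$ over bins $c+1,\dots,N$ whenever the placement there is exchangeable; taking the true expectation of $x=(N_0-b)+B^1_{c+1:N}$ and bounding the null part pessimistically by $N_0$ (so the bound is uniform in $b$) gives $1-\pi_c+\tfrac{b}{N-c}$, with the surplus $\tfrac{b}{N-c}$ reserved for later. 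Second, since under $\mathbbm{P}^1$ the alternatives placed in $c+1,\dots,N$ are not exchangeable, I would compare their within-bin placement law to the uniform one, bounding the change in the path event's probability by total variation and then by Pinsker's inequality; under the i.i.d. structure of Assumption \ref{assumption} the KL divergence tensorizes over the alternatives, and a Jensen step on the concave $\sqrt{\cdot}$ replaces their random count by $\mathbbm{E}[B^1_{c+1:N}\mid B^1_{1:c}=0]$, producing the additive correction $-V_c$.

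It then remains to assemble the null bookkeeping. Writing $(1-c/N)^{N_0-b}=(1-c/N)^{N_0}(1-c/N)^{-b}$ and using $B^0_{N+1}\sim\mathrm{Binom}(N_0,1-q)$ with the identity $(1-c/N)q+(1-q)=1-cq/N$, the moment $\mathbbm{E}[(1-c/N)^{-B^0_{N+1}}]$ collapses to $\big(\tfrac{N-cq}{N-c}\big)^{N_0}$; subtracting the $B^0_{N+1}\leq c-1$ tail and weakening this $N_0$-th power to its first power (valid as the base exceeds one) yields the factor $(1-c/N)^{N_0}M_c$ multiplying $1-\pi_c-V_c$. The reserved surplus contributes $\mathbbm{E}[\tfrac{B^0_{N+1}}{N-c}(1-c/N)^{N_0-B^0_{N+1}};B^0_{N+1}\geq c]$, which I would lower bound by Jensen's inequality (the map $b\mapsto b(1-c/N)^{-b}$ being convex), giving precisely the additive term $Z_c$. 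Collecting the three pieces reproduces $L_c$.

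The main obstacle is the second step of the path computation: under $\mathbbm{P}^1$ the alternative loads are not exchangeable, so the Ballot Theorem does not apply directly, and the whole bound hinges on controlling the deviation from the exchangeable benchmark. The delicate points are (i) that the number of alternatives landing in $c+1,\dots,N$ is itself random, so the Pinsker estimate must be combined with Jensen's inequality on $\sqrt{\cdot}$ to replace this count by its conditional mean, and (ii) that the KL tensorization --- hence the tractable form of $D_{KL}(\mathbbm{P}^1,c)$ --- is exactly what forces the i.i.d. hypothesis of Assumption \ref{assumption}. Everything else is careful but routine accounting of binomial moments, with each pessimistic substitution kept on the lower-bounding side.
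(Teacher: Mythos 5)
Your proposal is correct and follows essentially the same route as the paper's proof: the reduction $\Delta_c \geq \mathbbm{P}(\tilde{k}_{+c}=c,\, B^0_{N+1}\geq c)$, the factorization over $[B^\mathcal{N}_{1:c}=0]$ conditioned on $B^0_{N+1}$, the exchangeable Ballot-Theorem benchmark via Corollary \ref{corollary:: BH_BallotTheorem}, the Pinsker/KL-tensorization comparison with the Jensen step on $\sqrt{\cdot}$ producing $V_c$, and the binomial bookkeeping splitting the surplus $\frac{B^0_{N+1}}{N-c}$ into the $Z_c$ term via Jensen. In fact your account is slightly more explicit than the paper's at one point: you correctly note that $\mathbbm{E}\bigl[(1-\tfrac{c}{N})^{-B^0_{N+1}}\bigr]$ equals $\bigl(\tfrac{N-cq}{N-c}\bigr)^{N_0}$ and must be \emph{weakened} to its first power to obtain $M_c$, a step the paper records as an equality.
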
 
\begin{remark} \label{remark:: Zscores}
For a more concrete application/example, consider
when the p-values $\{p_i\}_{i \in \mathcal{N}}$ are derived from z-scores $\{z_i\}_{i \in \mathcal{N}}$ via $p_i = \mathbbm{P}(Z > z_i)$, where $Z \sim N(0,1)$, with null z-scores $\{z_i\}_{i \in \mathcal{H}_0}$ i.i.d. $N(0,1)$ and alternative z-scores $\{z_i \sim N(\mu^i_1, 1)\}_{i \in \mathcal{H}_1}$ (with all $\mu^i_1 \geq 0$). 
Indeed, under this framework, we may view the $\mu_1^i$ as the distance between $\mathbbm{P}_i^1$ and $U(0,1).$\\
Towards satisfying Assumption \ref{assumption}, we can assume throughout that there exists a nonnegative parameter $\mu_1$ such that the alternative parameters $(\mu_1^i, \sigma^i) = (\mu_1, 1)$ for all $i \in \mathcal{H}_1$. Indeed, if every member of the collection $\{\mu_1^i\}_{i \in \mathcal{H}_1}$ is in reality only close to zero, then the forthcoming estimates (lower bounds) would provide close, conservative estimates when setting $\mu_1= \max\{\mu_1^i: i \in \mathcal{H}_1\}$. 
 \\
When $\mu_1 > 0,$ it follows that $\delta(\mu_1) := (1-q) - \Phi\left(\frac{\Phi^{-1}(1 - q) - \mu_1}{\sigma_1}\right) > 0.$ As well, 
$\delta_j(\mu_1) := \Phi\left(\frac{\Phi^{-1}(1 - \frac{(j-1)q}{N}) - \mu_1}{\sigma_1}\right) - \Phi\left(\frac{\Phi^{-1}(1 - j\frac{q}{N}) - \mu_1}{\sigma_1}\right) - \frac{q}{N}$.
Intuitively, BH is adversarially robust ($\Delta_c$ small) when the alternative distributions are ``far" ($\mu_1 >> 0$) from the theoretical null, and the opposite holds when they are ``close" ($\mu_1 = 0$) - see Figures \ref{fig:: Mu1EqualsZeroBound} and \ref{fig:: Mu1GreaterZeroBound}.
\end{remark}

\section{Simulations and Data Experiments} \label{sec:: Experiments}
In this section, we provide computations (performed in R and Python on a Macbook Air-M2 chip, 8GB memory, with no experiment time exceeding 5 minutes) to demonstrate the performance of the adversarial algorithm INCREASE-c. We demonstrate its performance through simulation on synthetic data to make comparisons to the theoretical estimates provided in Section \ref{sec:: LowerBounds}. We then demonstrate its performance on a real-data experiment in outlier detection. 

\subsection{INCREASE-c Simulations on Synthetic Data}

\subsubsection{INCREASE-c Simulations on i.i.d. p-values} \label{sec:: IID_simulation}
Following the framework from Remark \ref{remark:: Zscores}, we simulated $10^4$ replications of the following experiment: (1) $N = 1000$ p-values are generated,
with $\{p_i\}_{i \in \mathcal{N}_0} \overset{iid}{\sim} U(0,1)$, and each $p_i$ among $\{p_i\}_{i \in \mathcal{N}_1}$ generated via $p_i = 1 - \Phi(\frac{z_i - \mu_1}{1})$, with $\{z_i\}_{i \in \mathcal{N}_0} \overset{iid}{\sim} N(\mu_1, 1)$; (2) $FDP[BH_q; p]$ and $FDP[BH_q; p_{+c}]$ are calculated. 
In Figure \ref{fig: SimulationBeforeAfterMu2} each of the $10^4$- many ($FDP[BH_q; p]$, $FDP[BH_q; p_{+c}]$) pairs are plotted. As can be seen, the vast majority of the pairs satisfy $FDP[BH_q; p_{+c}] > FDP[BH_q; p],$ and, further, all pairs lie above the horizontal line situated at the level of the BH control level $\pi_0 \cdot q = .09$, i.e, $FDP[BH_q; p_{+c}] > \pi_0\cdot q.$ 

In Table \ref{tab:: INCREASE-C versus Mu1}, we present the effectiveness of INCREASE-c over ranges of corruption budget $c$ and $\mu_1$ from small to large. As can be seen, when $\mu_1 = 0,$ any amount of corruption budget $c$ yields large post-corruption FDR $\mathbbm{E}_zFDP[BH_q; z_{+c}]$. When $\mu_1 > 0$ grows, however, the budget $c$ must correspondingly grow in order for there to be nontrivial post-corruption FDR. Finally, we note that for any fixed $c$, the increase in rejection count $\tilde{k}_{+c} - \tilde{k}$ is on the average larger when $\mu_1$ is larger. For experiments on non- i.i.d., PRDS p-values, we refer the interested reader to Section \ref{sec:: RealDataExperiment} or Appendix Section \ref{sec:: AppendixSimulations}.

\begin{figure}
\begin{floatrow}
\ffigbox{%
  \includegraphics[scale = 0.40, left]{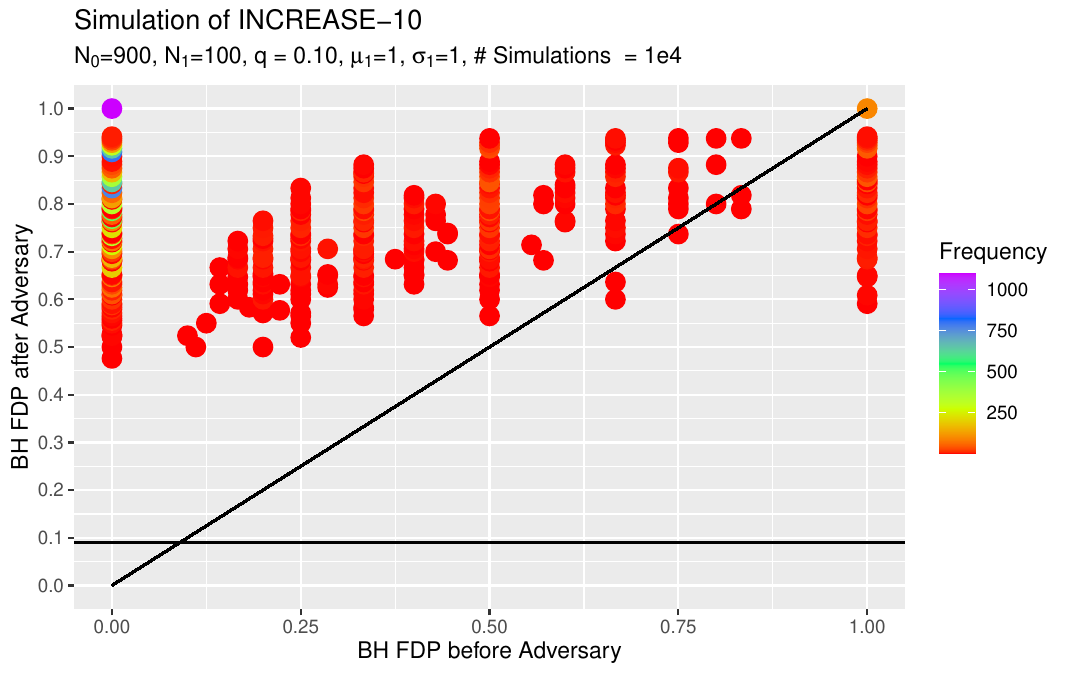}
  
}{%
  \caption{
     $10^4$ simulations of  
     FDP by $BH_q$ before and after INCREASE-10 is executed on the p-values. $N = 10^3$, $N_0 = 900$, and $q = 0.10.$ \label{fig: SimulationBeforeAfterMu2}
     }
}
\capbtabbox{%
    \begin{adjustbox}{width=2.5in,totalheight=1.25in,right}
      \begin{tabular}{|l|rrr|} 
    		\hline
    		$c\;\;\;|\;\;\;\mu_1$            & 0  & 1 &  2 \\ \hline
    		1  & .99 (1.11)                        & .75 (1.39)        & .14 (1.9)    \\
    		2  & .99 (2.22)                         & .77 (2.75)      & .18 (3.7)      \\
            5  & .99 (5.6)                      & .81 (6.67)            & .26 (9.0)    \\
            10 &  .99 (11.22)                       &  .84 (13.76)            & .36 (17.0)\\
            100 & .99 (111.30)                      & .91 (121.48)            & .72 (136.2)\\
            200 & .99 (222.41)                      & .93 (237.78)            & .81 (256.0)\\
            500 & .99 (555.59)                      & .95 (580.75)            & .89 (601.5)\\
    		\hline
    	\end{tabular}
     \end{adjustbox}
}
{\caption{Sample Average ($10^4$-batch) estimates of $\mathbbm{E}_zFDP[BH_q; z_{+c}]$ and $\mathbbm{E}\left[\tilde{k}_{+c} - \tilde{k}\right]$ (in parentheses) when all $\{\mu^i_1\}_{i \in \mathcal{H}_1}$  commonly equal some $\mu_1 \in \{0, 1, 2\}$ and $N = 10^3$, $q = 0.10$, $\pi_0 = 0.90$, and all $\sigma^i = 1$.} 
    \label{tab:: INCREASE-C versus Mu1}}
\end{floatrow}
\end{figure}

\subsubsection{INCREASE-1 Simulations versus Theoretical Bounds Under Small $\mu_1$} \label{sec:: INCREASE-1LowerBoundsPlots}
In Figures \ref{fig:: Mu1EqualsZeroBound} and \ref{fig:: Mu1GreaterZeroBound} we illustrate how well the insights discussed in Section \ref{sec:: SmallMuOneLowerBounds} capture the sensitivity of BH to adversarial perturbations when $\mu_1$ is near 0. Specifically, for each $q$ in the grid $\{.01, .02, \ldots, .99\}$, we computed the difference $FDP[BH_q; p_{+}] - FDP[BH_q; p]$ across $10^3$ replications of the setup as in Section \ref{sec:: IID_simulation}, with the average of this difference being an estimate of $\Delta_1.$ The plot of these $\Delta_1$ estimates with respect to $q$ is then compared with the plot of our lower bound $L_1$ as a function of $q.$ 

Figures \ref{fig:: Mu1EqualsZeroBound} and \ref{fig:: Mu1GreaterZeroBound} illustrate that the stricter the control, i.e., the smaller $q$ is, the more effective INCREASE-c becomes. In fact, the bound indicates high vulnerability for the typical use case of $q \in (0,0.10)$. On the other hand, as $q$ increases, INCREASE-c's effect weakens; however, for increased $q,$ the decision maker is accepting a higher FDR already.

\begin{figure}
\begin{floatrow}
\ffigbox{%
\begin{adjustbox}{width=2.75in,totalheight=1.75in,left}
  \includegraphics{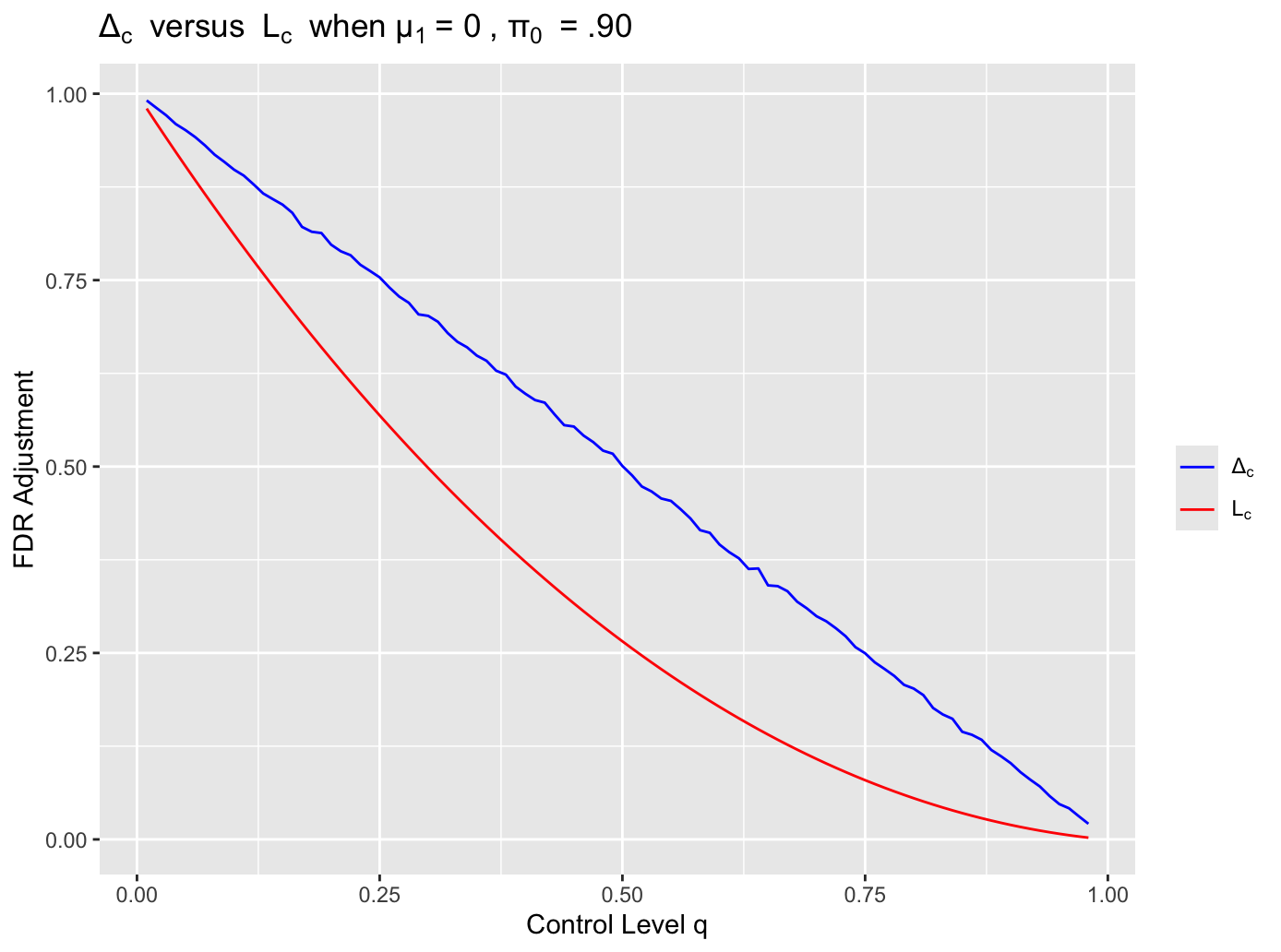}
  \end{adjustbox}
}{%
  \caption{Comparing the FDR increase $\Delta_1$ of INCREASE-1 with the lower bound $L_1$ of Theorem \ref{thm:: Mu1EqualsZeroBound} as functions of $q$ when $\mu_1 = 0$, $N = 1000$}
    \label{fig:: Mu1EqualsZeroBound}
}
\ffigbox{%
    \begin{adjustbox}{width=2.75in,totalheight=1.75in,right}
     \includegraphics{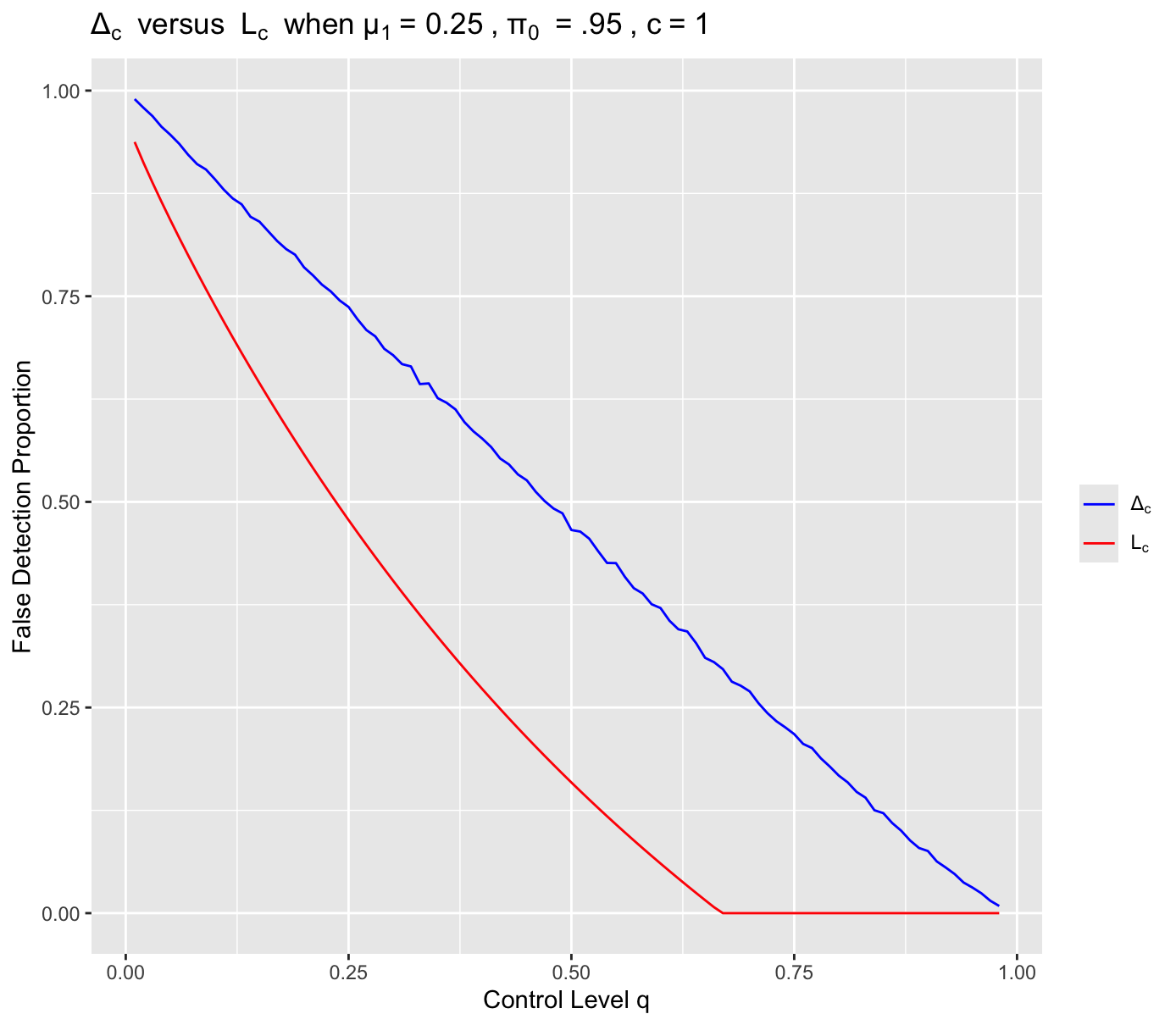}
     \end{adjustbox}
}
{\caption{Comparing the FDR increase $\Delta_1$ of INCREASE-1 with the lower bound $L_{1}$ of Theorem \ref{thm:: Mu1EqualsZeroBound} as functions of $q$ when $\mu_1 = .25,$ $N = 1000$}
         \label{fig:: Mu1GreaterZeroBound}}
\end{floatrow}
\end{figure}

\subsection{Real-Data Experiment: Credit Card Fraud Detection} \label{sec:: RealDataExperiment}
The \texttt{Credit Card}\footnote{https://www.kaggle.com/datasets/mlg-ulb/creditcardfraud} dataset $D:= \{(X_i, Y_i)\}_{i=1}^n \subseteq \mathbbm{R}^{30} \times \{0,1\} $ contains $n = 284,807$ credit card transactions in September 2013 by European cardholders over the course of two days - $492$ of which were frauds. Each $X_i \in \mathbbm{R}^{30}$ consists of numerical input variables that are the result of a PCA transformation. The ``Class" label $Y_i$ takes value 1 in case of fraud and 0 otherwise (/genuine), yielding the partition $[n] = n_0 \cupdot n_1$, with $|n_0| = 284,315$ and $|n_1| = 492$.

\subsubsection*{Fraud Detection Experiment}
Given a set of unlabeled transactions $\{X_i\}_{i \in S}$, where $S \subseteq [n]$, the \textit{fraud detection} task is to identify which members of $S$ belong to $n_1,$ i.e., are fraudulent. We experiment with the BH-based, outlier detection method of \cite{bates2023testing} on this fraud detection task - specifically, we consider the false detection proportion of this method in the absence and presence of an adversary. The details of the experimental setup will now be discussed. 

\subsubsection*{Training}
We begin by training an unsupervised decision-tree-based algorithm on a training set. From the set $n_0$ of genuine transactions, we uniformly at random select a subset $n_{train} \subseteq n_0$ of size $141,758$ to form a training set $D^{train}:= \{X_i\}_{i \in n_{train}}$ upon which we train an isolation forest \cite{liu2008isolation} $\hat{s}: \mathbbm{R}^{30} \rightarrow \mathbbm{R}_+$ using the R library \texttt{isotree}\footnote{https://cran.r-project.org/web/packages/isotree/vignettes/An\_Introduction\_to\_Isolation\_Forests.html}, where, in principle, $\hat{s}(X_i)$ returns an \textit{isolation depth} that is smaller if $Y_i = 1$ (i.e. is an outlier) and larger if $Y_i = 0$ (i.e. is an inlier)

\subsubsection*{Calibration and (Adversarially-Perturbed) Testing}
Then for each of $10^2$ simulations, we uniformly at random selected a subset $\tilde{n}_{cal} \subseteq n_0\setminus n_{train}$ of size $141,657$ to form a calibration set $\tilde{D}^{cal}:= \{X_i\}_{i \in \tilde{n}_{cal}}$ of strictly genuine transactions. As well, we uniformly at random selected a subset $\tilde{n}_{test,1} \subseteq n_1$ of $100$ fraudulent transactions to append to the $900$ remaining genuine transactions comprising $\tilde{n}_{test,0}:= n_0 \setminus n_{train} \setminus \tilde{n}_{cal}$ to form a test set $\tilde{D}^{test}:= \{X_i\}_{i \in \tilde{n}_{test}}$, where $\tilde{n}_{test}:= \tilde{n}_{test,0} \;\cup \; \tilde{n}_{test, 1}$. Finally, we transformed $X_i \mapsto \tilde{p}_i \in (0,1)$ for each $i \in \tilde{n}_{test}$ via $\tilde{p}_i = \frac{1 + |\{j \in \tilde{n}_{cal}: \hat{s}(X_j) \leq \hat{s}(X_i)\}|}{|\tilde{n}_{cal}|}.$
The resulting collection of conformal p-values $\tilde{p}:= (\tilde{p}_i)_{i \in \tilde{n}_{test}}$ is PRDS, as proven in \cite{bates2023testing}, and hence the FDR control of Lemma \ref{lemma::BH} holds (see \cite{benjamini2001}). In contrast, upon executing INCREASE-c to generate a corresponding adversarially-perturbed collection $\tilde{p}_{+c}:= (\tilde{p}_{+c, i})_{i \in \tilde{n}_{test}}$, we obtain a collection for which BH's FDR control no longer holds.

\subsubsection*{Experimental Results}
We executed $BH_{0.1}$, on both $\tilde{p}$ and $\tilde{p}_{+c}$, with an execution providing the decision for each $i \in \tilde{n}_{test}$ whether to report it as genuine (null) or fraudulent (alternative). We report the average (over the $10^2$ simulations) false detection proportion (FDP) produced by $BH_{0.1}$, i.e., both $E[FDP[BH_{0.1}; \tilde{p}]]$ and $E[FDP[BH_{0.1}; \tilde{p}_{+c}]]$ (for $c = 1, 5, 10$). As well, we report the average number of alleged frauds $\mathbbm{E}\left[\tilde{k}\right]$ and $\mathbbm{E}\left[\tilde{k}_{+c}\right]$. 
\begin{table}[h]
 \begin{tabular}{|l|rrrr|} 
    		\hline
    		$c$            & $E[FDP[BH_{0.1}; \tilde{p}]]$  & $E[FDP[BH_{0.1};\tilde{p}_{+c}]]$ &$E[\tilde{k}]$ &  $E[\tilde{k}_{+c}]$ \\ \hline
    		1  & .09                        & .10       & 60.21 & 60.21    \\
    		5  & .08                        & .17   & 48.69  & 57.12      \\
            10  & .09                      & .23  &  56.39    & 72.85    \\
            20 & 0.09 & 0.31 & 58.12 & 89.06\\
    		\hline
    	\end{tabular}
     {\caption{Credit Card Fraud Detection Experiment} 
    \label{tab:: CreditCardFraud}}
\end{table}
As Table \ref{tab:: CreditCardFraud} indicates, although the method of \cite{bates2023testing} can ordinarily control the FDR below the explicit $0.10$ level, INCREASE-c has the ability to push the FDR above this control level.

\normalsize{
\section{Conclusions}
 This is the first work to consider adversarial corruption of the popular Benjamini Hochberg multiple testing procedure to break its FDR control. 
 While BH may exhibit robustness when the alternative distributions are ``far" from the null, it exhibits great sensitivity in practical cases when the alternatives are ``closer" to the null. In such cases, with the modification of few p-values (as few as one), the attacker can increase the expected FDR well past the guarantee stipulated by the BH procedure. 
 This study suggests some caution may be necessary when using BH, especially in safety-security settings. Numerical experiments support the analytical results. Finally, BH is but one member of the family of step-up multiple testing procedures, which generally entail rejection regions decided via a stopping time, which since our paper shows can be manipulated in the case of BH, it means other step-up procedures can be similarly prone.
 }
\newpage
\begin{ack}
  We wish to thank Wang Chi Cheung for a stimulating conversation. This work was supported by the Air Force Office of Scientific Research (Mathematical Optimization Program) under the grant: “Optimal Decision Making under Tight Performance Requirements in Adversarial and Uncertain Environments: Insight from Rockafellian Functions”.
\end{ack}
 
\bibliographystyle{plain}
\bibliography{neurips_2024}

\newpage
\section{Appendix / supplemental material}
\subsection{Adversarial Algorithm: INCREASE-c} \label{sec:: MOVE-1Proofs}
\RejectionCountLargeMuOnePartTwo*
\begin{proof}
    Since the statement in the case of $c = 1$ is trivially true, we henceforth assume that $c > 1.$
    Recall $\tilde{k}_{+c} = \max\{i \in [c, N]_{\mathbbm{Z}}: B_{1:i} = i - c\}$. 
    Let us define $\tilde{k}^0_{+c}:= \max\{i \in [c, N]_{\mathbbm{Z}}: B^0_{1:i} = i - (B^1_{1:\tilde{k}} + c)\}$.
    Then we begin by establishing that 
    \[
    \tilde{k}_{+c} \geq \tilde{k}^0_{+c} > \tilde{k} + 1.
    \]
    To see why this holds, we note that  $B^0_{1:\tilde{k} + 1} = \tilde{k} - B^1_{1:\tilde{k}+1} = \tilde{k} + 1 - (B^1_{1:\tilde{k} +1} + 1) > \tilde{k} + 1 - (B^1_{1:\tilde{k} + 1} + c)$, 
    and this implies $\tilde{k} + 1 < \tilde{k}^0_{+c}$. Further, 
    $B_{1:\tilde{k}^0_{+c}} = B^0_{1:\tilde{k}^0_{+c}} + B^1_{1:\tilde{k}^0_{+c}} = \tilde{k}^0_{+c} - (B^1_{1:\tilde{k}} + c) + B^1_{1:\tilde{k}^0_{+c}} = \tilde{k}^0_{+c} - c + (B^1_{1:\tilde{k}^0_{+c}} - B^1_{1:\tilde{k}}) \geq \tilde{k}^0_{+c} - c,$ which implies $\tilde{k}_{+c} \geq \tilde{k}^0_{+c}.$

    Next, we justify the relation 
    \[
    \mathbbm{E}\left[\frac{B^0_{\tilde{k} + 2:N}}{N - (\tilde{k} + 1)} \;\; || \;\; \tilde{k}, B^1_{1:\tilde{k}}, [B^0_{N+1} \geq c] \right] = \mathbbm{E}\left[\frac{B^0_{\tilde{k}+2:\tilde{k}^0_{+c}}}{\tilde{k}^0_{+c} - (\tilde{k} + 1)} \;\; || \;\; \tilde{k}, B^1_{1:\tilde{k}}, [B^0_{N+1} \geq c] \right].
    \]
    It suffices to establish it for any fixed, joint realization of $\tilde{k}, B^1_{1:\tilde{k}}$ that occurs consistent with the event $[B^0_{N+1} \geq c]$ with positive probability. With slight abuse of notation, we will continue to use $\tilde{k}, B^1_{1:\tilde{k}}$ for such a fixed realization, and write $\bar{\mathbbm{E}} \left[ \cdot \right]$ for $\mathbbm{E}\left[ \cdot \|  \tilde{k}, B^1_{1:\tilde{k}}, [B^0_{N+1} \geq c] \right]$. The plan is to apply the Optional Stopping Theorem on a martingale sequence. To do so, let us form a (backwards-running) filtration: let $\mathcal{F}_{N}$ be the sigma-algebra generated by the event $[B^0_{N+1} \geq c]$ as well as the random variable $B^0_{N+1}$, and let $\mathcal{F}_{i}$ be the sigma-algebra generated by the event $[B^0_{N+1} \geq c]$ as well as the random variables $\{B^0_{j}\}_{j=i+1}^{N+1}$. 
    Then for any integers $k, \ell \in \{N, N-1, \ldots 1\}$ such that $\ell > k$, 
    \[
    \bar{\mathbbm{E}}\left[B^0_{\tilde{k} + 2: [k \vee (\tilde{k} + 2)]}\;\; || \;\; \mathcal{F}_{\ell}\right] =   \bar{\mathbbm{E}}\left[B^0_{\tilde{k} + 2: [k \vee (\tilde{k} + 2)]}\;\; || B^0_{\tilde{k}+2: [\ell \vee (\tilde{k} + 2)]}\right]  =  \frac{[k - (\tilde{k} + 1)] \vee 1}{[\ell - (\tilde{k} + 1)] \vee 1} B^0_{\tilde{k}+2: [\ell \vee (\tilde{k} + 2)]},
    \]
    where $B^0_{\tilde{k} + 2: [\ell \vee (\tilde{k} + 2)]} \in \mathcal{F}_{\ell}$ since $B^0_{\tilde{k} + 2: [\ell \vee (\tilde{k} + 2)]} = N_0 - B^0_{N+1} - (\tilde{k} - B^1_{1:\tilde{k}}) - B^0_{(\ell+1) \vee (\tilde{k} + 3):N}
    $
    is a measurable function of the random variables in $\mathcal{F}_\ell$.
     In other words, the collection $\{\frac{B^0_{\tilde{k} + 2:[k \vee (\tilde{k} + 2)]}}{[k - (\tilde{k} + 1)] \vee 1}\}$ forms a martingale under the filtration. As well, $\tilde{k}^0_{+c}$ is a stopping time. Hence, by the Optional Stopping Theorem,
    \begin{align*}
    \mathbbm{E}\left[\frac{B^0_{\tilde{k} + 2:N}}{N - (\tilde{k} + 1)} \;\; || \;\; \tilde{k}, B^1_{1:\tilde{k}}, [B^0_{N+1} \geq c] \right] &= \bar{\mathbbm{E}}\left[\frac{B^0_{\tilde{k} + 2:N}}{N - (\tilde{k} + 1)} \;\; || \;\; [B^0_{N+1} \geq c] \right] \\
    &= \bar{\mathbbm{E}}\left[\frac{B^0_{\tilde{k}+2:\tilde{k}^0_{+c}}}{\tilde{k}^0_{+c} - (\tilde{k} + 1)} \;\; || \;\; [B^0_{N+1} \geq c] \right] = \mathbbm{E}\left[\frac{B^0_{\tilde{k}+2:\tilde{k}^0_{+c}}}{\tilde{k}^0_{+c} - (\tilde{k} + 1)} \;\; || \;\; \tilde{k}, B^1_{1:\tilde{k}}, [B^0_{N+1} \geq c] \right].
    \end{align*}
    We note that the martingale property could be established even without the event $[B^0_{N+1} \geq c]$ in the filtration but for the sake of forthcoming computations we have included it. 
    
    Next we observe that 
    \begin{align*}
        &B^0_{\tilde{k}+2:\tilde{k}^0_{+c}} + \tilde{k} - B^1_{1:\tilde{k}} = B^0_{\tilde{k}+2:\tilde{k}^0_{+c}} + B^0_{1:\tilde{k}} = B^0_{1: \tilde{k}^0_{+c}} = \tilde{k}^0_{+c} - (B^1_{1: \tilde{k}} + c)\\
        &\implies B^0_{\tilde{k}+2:\tilde{k}^0_{+c}} = \tilde{k}^0_{+c} - (\tilde{k} + c),
    \end{align*}
    we derive 
    \begin{align*}
        &\frac{B^0_{\tilde{k}+2:\tilde{k}^0_{+c}}}{\tilde{k}^0_{+c} - (\tilde{k} + 1)} = \frac{\tilde{k}^0_{+c} - (\tilde{k} + c)}{\tilde{k}^0_{+c} - (\tilde{k} + 1)} = \frac{\tilde{k}^0_{+c} - (\tilde{k} + 1) - c + 1}{\tilde{k}^0_{+c} - (\tilde{k} + 1)} = 1 - \frac{c-1}{\tilde{k}^0_{+c} - (\tilde{k} + 1)}\\
        &\implies 0 < \mathbbm{E}\left[\frac{B^0_{\tilde{k} + 2:N}}{N - (\tilde{k} + 1)} \;\; || \;\; \tilde{k}, B^1_{1:\tilde{k}}, [B^0_{N+1} \geq c] \right] = 1 - (c-1)\mathbbm{E}\left[\frac{1}{\tilde{k}^0_{+c} - (\tilde{k} + 1)} \;\; || \;\; \tilde{k}, B^1_{1:\tilde{k}}, [B^0_{N+1} \geq c] \right] < 1 \tag{*}\\
        &\implies \frac{1}{\mathbbm{E}\left[\tilde{k}^0_{+c} - (\tilde{k} + 1)\;\; ||\;\;\tilde{k}, B^1_{1:\tilde{k}},  [B^0_{N+1} \geq c] \right]} \leq \mathbbm{E}\left[\frac{1}{\tilde{k}^0_{+c} - (\tilde{k} + 1)}\;\; ||\;\;\tilde{k}, B^1_{1:\tilde{k}}, [B^0_{N+1} \geq c] \right] \\
        &= \frac{1 - \mathbbm{E}\left[\frac{B^0_{\tilde{k} + 2:N}}{N - (\tilde{k} + 1)} \;\; || \;\; \tilde{k}, B^1_{1:\tilde{k}}, [B^0_{N+1} \geq c] \right]}{c - 1}.
    \end{align*}
    Hence, 
    \[
    \frac{c-1}{1 - \mathbbm{E}\left[\frac{B^0_{\tilde{k} + 2:N}}{N - (\tilde{k} + 1)} \;\; || \;\; \tilde{k}, B^1_{1:\tilde{k}}, [B^0_{N+1} \geq c] \right]} + 1 \leq \mathbbm{E}\left[\tilde{k}^0_{+c} - \tilde{k}\;\; ||\;\;\tilde{k}, B^1_{1:\tilde{k}}, [B^0_{N+1} \geq c] \right] \leq \mathbbm{E}\left[\tilde{k}_{+c} - \tilde{k}\;\; ||\;\;\tilde{k}, B^1_{1:\tilde{k}}, [B^0_{N+1} \geq c] \right].
    \]
    It follows that  
    \begin{align*}
        \frac{c-1}{1 - \mathbbm{E}\left[\frac{B^0_{\tilde{k} + 2:N}}{N - (\tilde{k} + 1)} \;\; || \;\; B^0_{N+1} \geq c\right]} + 1 & = \frac{c-1}{\mathbbm{E}\left[1 - \mathbbm{E}\left[\frac{B^0_{\tilde{k} + 2:N}}{N - (\tilde{k} + 1)} \;\; || \;\; \tilde{k}, B^1_{1:\tilde{k}}, [B^0_{N+1} \geq c] \right] \;\; || \;\; B^0_{N+1} \geq c\right]}  + 1 \\
        &\leq \mathbbm{E}\left[\frac{c-1}{1 - \mathbbm{E}\left[\frac{B^0_{\tilde{k} + 2:N}}{N - (\tilde{k} + 1)} \;\; || \;\; \tilde{k}, B^1_{1:\tilde{k}}, [B^0_{N+1} \geq c] \right]} \;\; || \;\; B^0_{N+1} \geq c\right] + 1 \\
        &\leq \mathbbm{E}\left[\tilde{k}_{+c} - \tilde{k}\;\; ||\;\;B^0_{N+1} \geq c\right],
    \end{align*}
    where we have used (conditional) Jensen's Inequality - valid because $c > 1$ and (*) ensures $1 - \mathbbm{E}\left[\frac{B^0_{\tilde{k} + 2:N}}{N - (\tilde{k} + 1)} \;\; || \;\; \tilde{k}, B^1_{1:\tilde{k}}, [B^0_{N+1} \geq c] \right] > 0$.

\end{proof}

\LowerBound*
\begin{proof}
 We derive
    \begin{align*}
        \mathbbm{E}FDP[BH_q; p_+]
        & = \mathbbm{E} \left[\frac{B^0_{1:\tilde{k}_{+c}} + c}{\tilde{k}_{+c}} ; B^0_{N+1} \geq c \right] + \mathbbm{E} \left[\frac{B^0_{1:\tilde{k}\vee 1}}{\tilde{k}\vee 1} ; B^0_{N+1} < c \right] \\
        &= \mathbbm{E} \left[\frac{B^0_{1:\tilde{k}_{+c}}}{\tilde{k}_{+c}} ; B^0_{N+1} \geq c \right] + \mathbbm{E} \left[\frac{c}{\tilde{k}_{+c}} ; B^0_{N+1} \geq c \right] +  \mathbbm{E} \left[\frac{B^0_{1:\tilde{k}\vee 1}}{\tilde{k}\vee 1} ; B^0_{N+1} < c \right]\\
        &= \mathbbm{P}\left(B^0_{N+1} \geq c\right) \mathbbm{E} \left[\frac{B^0_{1:N}}{N} \Big|\Big| B^0_{N+1} \geq c \right] + \mathbbm{E} \left[\frac{c}{\tilde{k}_{+c}} ; B^0_{N+1} \geq c \right] + \mathbbm{P}\left(B^0_{N+1} < c\right) \mathbbm{E} \left[\frac{B^0_{1:N}}{N} \Big|\Big| B^0_{N+1} < c \right]\\
        &= \mathbbm{E} \left[\frac{B^0_{1:N}}{N}\right] + \mathbbm{E} \left[\frac{c}{\tilde{k}_{+c}} ; B^0_{N+1} \geq c \right],
    \end{align*}
    as desired. 
\end{proof}

\subsection{MOVE-1} \label{sec:: AppendixMove1}
MOVE-1 is an efficient, optimal algorithm for an (omniscient) adversary with budget $c = 1$. We begin with several small insights en route to describing MOVE-1 in full. 

\subsubsection*{Perturbing the Rejection Count}

If $p' = p$, i.e., the sample statistics are left unperturbed, we obtain a false detection proportion of $FDP\left[BH_q;p\right] = \frac{B^0_{1:\tilde{k}}}{\tilde{k}}$. If it is possible to improve on this, it can be easily shown that in the case of $c=1$ we must induce an altered rejection count 
\begin{equation} \label{def:: AdjustedRejectionCount}
\tilde{k}':= \max\Bigl\{i \in [0, N]_{\mathbbm{Z}} : |\{p'_i\}_{i\in \mathcal{N}} \cap B_{1:i}\}| = i\Bigr\},
\end{equation}
henceforth referred to as the \textit{(adversarially) adjusted rejection count} resulting from the adversary's choice of $p'$.  
As it turns out, for the special case of $c =1$, $\tilde{k
}' \neq \tilde{k}$ is necessary if a larger FDP is to be obtained, as the following lemma indicates. 

\begin{restatable}{lemma}{MustPerturb} \label{lemma:: MustPerturb} 
 If $||p - p'||_0 \leq 1$ and $FDP\left[BH_q;p'\right] \neq FDP\left[BH_q; p\right],$ then $\tilde{k}' \neq \tilde{k}$.
\end{restatable}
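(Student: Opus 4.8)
The plan is to prove the contrapositive: assuming $\tilde{k}' = \tilde{k} =: k$, I will show that $FDP[BH_q; p'] = FDP[BH_q; p]$. Working entirely in the balls-into-bins framework of Section~\ref{sec:: BallsIntoBins}, recall that $BH_q$ rejects exactly the tests whose p-values lie in the first $\tilde{k}$ bins, so its number of rejections equals $B^\mathcal{N}_{1:\tilde{k}} = \tilde{k}$ and its number of false discoveries equals the null load $B^0_{1:\tilde{k}}$. Hence $FDP[BH_q; p] = \frac{B^0_{1:\tilde{k}}}{\tilde{k} \vee 1}$, and likewise $FDP[BH_q; p'] = \frac{B^0_{1:\tilde{k}'}(p')}{\tilde{k}' \vee 1}$, where I write $B^0_{1:i}(p')$ to emphasize that the load is computed from the perturbed collection. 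Since $\tilde{k}' = \tilde{k} = k$, the denominators already agree, so it remains only to show that the two null loads $B^0_{1:k}(p)$ and $B^0_{1:k}(p')$ coincide.

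First I would record the immediate consequence of the definitions of $\tilde{k}$ and $\tilde{k}'$ as maximizers: $B^\mathcal{N}_{1:k}(p) = k$ and $B^\mathcal{N}_{1:k}(p') = k$, i.e., the \emph{total} load of the region $B_{1:k}$ is unchanged by the perturbation. Because $\|p - p'\|_0 \leq 1$, at most one p-value is moved; call the affected test $j$ (if none is moved there is nothing to prove). I would then argue that the equality $B^\mathcal{N}_{1:k}(p) = B^\mathcal{N}_{1:k}(p') = k$ forces the single moved ball \emph{not} to cross the boundary of $B_{1:k}$: a ball leaving $B_{1:k}$ would drop the total load to $k-1$, while a ball entering would raise it to $k+1$, and either contradicts $B^\mathcal{N}_{1:k}(p') = k$. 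Therefore $p_j \in B_{1:k}$ if and only if $p'_j \in B_{1:k}$.

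With this no-crossing fact in hand, the conclusion follows from a short case split on the label of $j$. If $j \in \mathcal{H}_1$, the null p-values are untouched, so $B^0_{1:k}$ is computed from an identical set of null values and is unchanged. If $j \in \mathcal{H}_0$, the moved null ball lies on the same side of the boundary of $B_{1:k}$ before and after, so it is either counted in both $B^0_{1:k}(p)$ and $B^0_{1:k}(p')$ or in neither; since all remaining null balls are fixed, again $B^0_{1:k}(p) = B^0_{1:k}(p')$. In every case the numerators agree, giving $FDP[BH_q; p'] = FDP[BH_q; p]$, which is exactly the contrapositive of the claim.

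I expect the only genuinely load-bearing step to be the boundary-crossing argument in the second paragraph: the observation that a single perturbation cannot change which side of $B_{1:k}$'s boundary a ball lies on without changing the total load $B^\mathcal{N}_{1:k}$, combined with the way the hypothesis $\tilde{k}' = \tilde{k}$ pins that total load to $k$. Everything else is bookkeeping with the definitions of $FDP$ and of the rejection count. The one edge case worth a sentence is $k = 0$, where both collections produce no rejections and $FDP = 0$ trivially on each side.
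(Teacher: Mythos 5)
Your proof is correct and is essentially the paper's argument in contrapositive form: the paper likewise notes that $\tilde{k}' = \tilde{k}$ forces $|\{j \in \mathcal{N}: p'_j < \tilde{k}\frac{q}{N}\}| = |\{j \in \mathcal{N}: p_j < \tilde{k}\frac{q}{N}\}|$ while an FDP change would force the null counts in that region to differ, which is impossible under a single perturbation. Your boundary-crossing observation simply makes explicit the step the paper summarizes as the two conclusions being ``at odds,'' so there is no substantive difference.
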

\begin{proof}
    We suppose the contrary for the sake of a contradiction, i.e., $\tilde{k}' = \tilde{k}$. It follows that 
    \[
    |\{j \in \mathcal{N}: 0 \leq p'_j < \tilde{k} \frac{q}{N}\}| 
    =
    \tilde{k}'
    =
    \tilde{k}
    =
    |\{j \in \mathcal{N}: 0 \leq p_j < \tilde{k} \frac{q}{N}\}| ,
    \]
    and then by $FDP\left[BH_q;p';\right] \neq FDP\left[BH_q; p\right],$ it follows that 
    \[
    |\{j \in \mathcal{H}_0: 0 \leq p'_j < \tilde{k} \frac{q}{N}\}| 
    \neq
    |\{j \in \mathcal{H}_0: 0 \leq p_j < \tilde{k} \frac{q}{N}\}| ,
    \]
    Since $||p - p'||_0 \leq 1$, these two conclusions are at odds, presenting a contradiction. 
\end{proof}

Considering Lemma \ref{lemma:: MustPerturb}, the adversary's $p' \in [0,1]^N$ decision simplifies to deciding on an adjusted rejection count $\tilde{k}'$ from among a constrained set of integers consistent with the constraint $\|p' - p\|_0 \leq 1$. An optimal $\tilde{k}'$ can indeed be larger or smaller than, or even equal to $\tilde{k}.$ Hence, towards understanding this new search space, it suffices  to characterize the set of feasible $\tilde{k}'$ that are larger than $\tilde{k}$, and smaller.  

\subsubsection*{Increasing the Rejection Count (c = 1)}
Towards understanding how to increase the rejection count, we first highlight the following fact that follows immediately from (\ref{def:: RejectionCount}). 
\begin{restatable}{lemma}{tildekPlusOne}
    \label{lemma::tildek+1}
    If $i \in \{\tilde{k} + 1, \ldots, N\},$ then $B^\mathcal{N}_{1:i} \leq i - 1.$ In particular, $B^{\mathcal{N}}_{1:\tilde{k} + 1} = i - 1.$
\end{restatable}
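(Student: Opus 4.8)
The plan is to track the signed excess $f(i) := B^\mathcal{N}_{1:i} - i$ as $i$ runs from $0$ to $N$, and to exploit a one-sided monotonicity of its increments. The definition $\tilde{k} = \max\{i \in [0,N]_{\mathbbm{Z}} : B^\mathcal{N}_{1:i} = i\}$ says exactly that $f(\tilde{k}) = 0$ while $f(i) \neq 0$ for every $i \in \{\tilde{k}+1, \ldots, N\}$; for such $i$ this leaves either $f(i) \leq -1$ (the desired bound $B^\mathcal{N}_{1:i} \leq i-1$) or $f(i) \geq 1$, and the whole content of the lemma is to rule out the second case. I would flag explicitly that the naive reading ``$B^\mathcal{N}_{1:i} \neq i$ by maximality, hence $B^\mathcal{N}_{1:i} \leq i-1$'' is \emph{not} valid on its own, since cumulative loads can exceed $i$ when balls concentrate in the early bins; this overshoot is precisely the obstacle the argument must close, and it is the only genuinely non-routine point.

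The two facts I would assemble are: (i) $f(i) - f(i-1) = B^\mathcal{N}_i - 1 \geq -1$, because each bin load $B^\mathcal{N}_i$ is a nonnegative integer, so $f$ decreases by at most $1$ at each unit step (it may jump up by any amount); and (ii) $f(N) = B^\mathcal{N}_{1:N} - N = -B^\mathcal{N}_{N+1} \leq 0$, since of the $N$ balls only $N - B^\mathcal{N}_{N+1}$ land in bins $1,\ldots,N$. Now suppose toward contradiction that $f(i_0) \geq 1$ for some $i_0 \in \{\tilde{k}+1, \ldots, N\}$. Since $f(N) \leq 0$, let $j$ be the first index after $i_0$ with $f(j) \leq 0$ (it exists); then $f(j-1) \geq 1$ (all of $f(i_0),\ldots,f(j-1)$ are positive), while the at-most-$(-1)$ descent gives $f(j) \geq f(j-1) - 1 \geq 0$, forcing $f(j) = 0$ with $j > i_0 > \tilde{k}$. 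This contradicts the maximality of $\tilde{k}$, so $f(i) \leq -1$, i.e.\ $B^\mathcal{N}_{1:i} \leq i-1$, throughout $\{\tilde{k}+1,\ldots,N\}$.

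For the ``in particular'' clause, which I read as the equality case of the bound at $i = \tilde{k}+1$ (meaningful when $\tilde{k} < N$, so that $\tilde k + 1 \le N$), I would combine the inequality just proved with $B^\mathcal{N}_{1:\tilde{k}} = \tilde{k}$: writing $B^\mathcal{N}_{1:\tilde{k}+1} = B^\mathcal{N}_{1:\tilde{k}} + B^\mathcal{N}_{\tilde{k}+1} = \tilde{k} + B^\mathcal{N}_{\tilde{k}+1}$ and invoking $B^\mathcal{N}_{1:\tilde{k}+1} \leq \tilde{k}$ forces $B^\mathcal{N}_{\tilde{k}+1} = 0$, whence $B^\mathcal{N}_{1:\tilde{k}+1} = \tilde{k}$. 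Everything besides the first-passage argument in the middle paragraph is bookkeeping with the balls-into-bins definitions.
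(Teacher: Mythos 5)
Your proof is correct and is essentially the paper's own argument: the paper likewise rules out the overshoot case $B^\mathcal{N}_{1:i} \geq i+1$ by forcing a later index $j > \tilde{k}$ with $B^\mathcal{N}_{1:j} = j$ (phrased there as: otherwise $B^\mathcal{N}_{1:j} \geq j+1$ for every $j$ up to $N$, contradicting the total of $N$ p-values), which is the same discrete intermediate-value/first-passage idea as your step bound $f(j) \geq f(j-1) - 1$ on the excess $f(j) = B^\mathcal{N}_{1:j} - j$. Your explicit handling of the ``in particular'' clause --- reading the statement's typo $i-1$ at $i = \tilde{k}+1$ as $B^\mathcal{N}_{1:\tilde{k}+1} = \tilde{k}$ and deducing $B^\mathcal{N}_{\tilde{k}+1} = 0$ --- is a correct addition that the paper's proof leaves implicit.
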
 
\begin{proof}
    Suppose there exists $i \in \{\tilde{k} + 1, \ldots, N\}$ such that $B^\mathcal{N}_{1:i} > i - 1$. If $B^\mathcal{N}_{1:i} = i$, then $i > \tilde{k}$ presents a contradiction of (\ref{def:: RejectionCount}). However, proceeding with $B^\mathcal{N}_{1:i} \geq i+1$, we see that there necessarily exists $j \in \{i + 1, \ldots, N\}$ for which $B^\mathcal{N}_{1:j} = j$, for if this were not the case, then $B^\mathcal{N}_{1:j} \geq j+1$ for all $j\in \{i+1, \ldots, N\}$, meaning $B^\mathcal{N}_{1:N} \geq N+1,$ yet another contradiction since there are only $N$ p-values. 
\end{proof}
Consequently, for $\tilde{k}' > \tilde{k},$ we require a perturbed collection $p'$ for which bin counts increase. This necessitates a \textit{decrease} of a p-value - see Lemma \ref{lemma::Increasing} for details. Hence, we define
\begin{equation} \label{def:: Left}
\mathcal{L}:= \{i \in \{\tilde{k} + 1, \ldots, N\}: B^\mathcal{N}_{1:i} 
    = i - 1\} 
\end{equation}
and note that these are precisely the positions $i$ for which the addition of one (and only one) p-value into $B_{1:i}$ through the \textit{decrease} of a p-value brings $i$ into candidacy for the new rejection count - see equation (\ref{def:: RejectionCount}). 
\begin{restatable}{proposition}{propL}
\label{prop:: L}
    $\mathcal{L} = \{\tilde{k}': \tilde{k}' > \tilde{k}, \|p - p'|_0 \leq 1\}$
\end{restatable}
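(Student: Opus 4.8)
The statement is a set equality, so the plan is to prove the two inclusions separately, throughout exploiting the balls-into-bins dictionary in which perturbing one p-value corresponds to moving a single ball between bins, and $B^{\mathcal{N}}_{1:j}$ denotes the (original) total load of the first $j$ bins.

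For the inclusion $\mathcal{L} \subseteq \{\tilde{k}' : \tilde{k}' > \tilde{k},\ \|p-p'\|_0 \leq 1\}$, I would fix $i \in \mathcal{L}$ — so that $i \in \{\tilde{k}+1,\ldots,N\}$ and $B^{\mathcal{N}}_{1:i} = i-1$ — and construct an explicit perturbation realizing $\tilde{k}'=i$. Since exactly $i-1$ of the $N$ balls lie in $B_{1:i}$, the remaining $N-i+1 \geq 1$ balls occupy bins of index strictly greater than $i$; let $m > i$ be the smallest such occupied index, and move one ball from bin $m$ down into bin $i$. The resulting prefix loads ${B'}^{\mathcal{N}}_{1:j}$ agree with $B^{\mathcal{N}}_{1:j}$ for $j<i$ and for $j\geq m$, and equal $B^{\mathcal{N}}_{1:j}+1$ for $i\leq j<m$; in particular ${B'}^{\mathcal{N}}_{1:i}=i$, so $i$ is a valid candidate in \eqref{def:: AdjustedRejectionCount}.

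The delicate point — and the main obstacle — is to guarantee that $i$ is the \emph{maximal} such candidate, i.e.\ $\tilde{k}'=i$ rather than some larger value. Here the minimality of $m$ does the work: for $i<j<m$ the intervening bins are empty, so $B^{\mathcal{N}}_{1:j}=i-1<j-1$ and hence ${B'}^{\mathcal{N}}_{1:j}=i\leq j-1<j$; while for $j\geq m$ we have ${B'}^{\mathcal{N}}_{1:j}=B^{\mathcal{N}}_{1:j}\leq j-1$ by Lemma \ref{lemma::tildek+1}. Thus no index above $i$ is a candidate, and a single perturbation yields $\tilde{k}'=i>\tilde{k}$, giving $i\in\{\tilde{k}':\tilde{k}'>\tilde{k},\ \|p-p'\|_0\leq 1\}$.

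For the reverse inclusion, I would take any achievable $\tilde{k}'>\tilde{k}$ arising from some $p'$ with $\|p-p'\|_0\leq 1$; then $\tilde{k}'\in\{\tilde{k}+1,\ldots,N\}$, and by \eqref{def:: AdjustedRejectionCount} the perturbed prefix load satisfies ${B'}^{\mathcal{N}}_{1:\tilde{k}'}=\tilde{k}'$. Since at most one p-value moved, the original and perturbed prefix loads over the fixed region $B_{1:\tilde{k}'}$ differ by at most one, so $\tilde{k}'={B'}^{\mathcal{N}}_{1:\tilde{k}'}\leq B^{\mathcal{N}}_{1:\tilde{k}'}+1$; combined with $B^{\mathcal{N}}_{1:\tilde{k}'}\leq\tilde{k}'-1$ from Lemma \ref{lemma::tildek+1}, this sandwiches $B^{\mathcal{N}}_{1:\tilde{k}'}=\tilde{k}'-1$, i.e.\ $\tilde{k}'\in\mathcal{L}$. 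The two inclusions together give the claimed equality.
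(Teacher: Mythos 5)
Your proof is correct and takes essentially the same route as the paper's: both directions rest on Lemma \ref{lemma::tildek+1} together with the observation that moving a single p-value changes each prefix load $B^{\mathcal{N}}_{1:j}$ by at most one (the content of Lemma \ref{lemma::Increasing}), plus an explicit achieving move for each $i \in \mathcal{L}$. Your two refinements---the sandwich $\tilde{k}'-1 \leq B^{\mathcal{N}}_{1:\tilde{k}'} \leq \tilde{k}'-1$ for the converse, and the maximality check via the smallest occupied bin $m > i$, which the paper's ``if and only if'' enumeration of $\mathcal{L}$ leaves more implicit---are welcome streamlinings but not a different strategy.
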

\begin{proof}
To prove the first statement, we recall that by Lemma \ref{lemma::tildek+1}, if $i > \tilde{k},$ then $B^\mathcal{N}_{1:i} \leq i - 1.$ So, if $i \geq \tilde{k}+1$ with $B^\mathcal{N}_{1:i} < i - 1,$ then no decrease of a single p-value could increase $B^\mathcal{N}_{1:i}$ to $i$ (Lemma \ref{lemma::Increasing}), precluding the possibility of $\tilde{k}' = i.$ In other words, no $i \notin \mathcal{L}$ is achievable for the new rejection count $\tilde{k}'.$  

On the other hand, say we enumerate $\mathcal{L}$ with 
\[
    i_L > i_{L-1} > \ldots > i_1 = \tilde{k} + 1.
\]
For the case of $i_L$, $\tilde{k}' = i_L$ is achieved if and only if a p-value is decreased from any bin $B_j$ with $j > i_\ell$ to a bin $B_{j - s}$ where $i_L \geq j- s \geq 1.$ For the case of $i_\ell$, $\tilde{k}' = i_\ell$ is achieved if and only if a p-value is decreased from any bin $B_j$ with $i_{\ell + 1} \geq j > i_\ell$ to a bin $B_{j - s}$ where $i_\ell \geq j- s \geq 1.$ Finally, all these movements of p-value just described are always possible for any given $p = \{p_i\}_{i \in \mathcal{N}}$.
\end{proof}
\subsubsection*{Decreasing the Rejection Count (c = 1)}

Analogously, it follows that the increase of a p-value is necessary for the \textit{decrease} of the rejection count - see Lemma \ref{lemma:: Decreasing}, and we define 
\begin{equation} 
\mathcal{R}:= \Bigl\{i \in \{i^* + 1, \ldots, \tilde{k} - 1\}: B^\mathcal{N}_{1:i} = i\Bigr\} \cup \{i^*\}, 
\end{equation}
where $i^*:= 0 \vee \max\Bigl\{i \in \{1, \ldots, \tilde{k} - 1\}: B^\mathcal{N}_{1:i} = i+1  \Bigr\}.$
We note that $\mathcal{R}$ is precisely the set of all the achievable new (decreased) rejection counts $\tilde{k}' < \tilde{k}$ we could induce with the perturbation of a single p-value. 

\begin{restatable}{proposition}{propR} 
\label{prop:: R}
$\mathcal{R} = \{\tilde{k}': \tilde{k} > \tilde{k}', \|p - p'|_0 \leq 1\}$
\end{restatable}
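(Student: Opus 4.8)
The plan is to mirror the proof of Proposition~\ref{prop:: L}, working with the \emph{height function} $h_i := B^{\mathcal N}_{1:i} - i$, whose zeros are exactly the equality points and whose largest zero is $\tilde k$ (with $h_0 = 0$). By Lemma~\ref{lemma:: Decreasing}, any single perturbation that strictly decreases the rejection count must \emph{increase} one p-value, i.e.\ move a single ball from some bin $B_j$ up to a higher bin $B_{j'}$ with $j<j'$; this lowers $B^{\mathcal N}_{1:i}$, hence $h_i$, by exactly one on the contiguous block $\{j,\dots,j'-1\}$ and leaves it unchanged elsewhere. Thus the problem reduces to asking which values the largest surviving-or-created zero of the perturbed height function can take, as we range over the contiguous decrement blocks realizable by a single \emph{available} ball.

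The key structural step, which I would isolate as a preliminary claim, is a \emph{barrier} property on $\{i^*,\dots,\tilde k\}$: (i) $h_i \le 0$ for every $i \in \{i^*+1,\dots,\tilde k\}$, and (ii) the bin $B_{i^*+1}$ is empty, so $i^*+1$ is itself an equality point. Both follow from the maximality of $i^*$ together with the fact that $h$ changes by $(\text{bin load}-1)\ge -1$ at each step: since $h_{i^*}=1$ while $h_i\neq 1$ for all $i\in(i^*,\tilde k)$, and $h$ drops by at most one per step, $h$ cannot take a value $\ge 2$ anywhere in $(i^*,\tilde k)$ (it would have to cross the forbidden value $1$ on its way back to $h_{\tilde k}=0$), forcing $h_{i^*+1}=0$ and $\mathrm{load}(B_{i^*+1}) = h_{i^*+1}-h_{i^*}+1 = 0$. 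This emptiness is precisely what prevents the adversary from peeling off $i^*+1$ while sparing the lower diagonal.

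For necessity — that any achievable $\tilde k' < \tilde k$ lies in $\mathcal R$ — I would show $\tilde k' \ge i^*$ and that $\tilde k'$ is an equality point whenever $\tilde k' > i^*$. Since $h_{\tilde k}=0$, obtaining $\tilde k'<\tilde k$ requires the decrement block to contain $\tilde k$, so it has the form $\{j,\dots,\ell\}$ with $\ell \ge \tilde k$. If $j \le i^*$ the block contains $i^*$, turning $h_{i^*}=1$ into a zero and giving $\tilde k' \ge i^*$; if $j \ge i^*+2$ the equality point $i^*+1$ survives, giving $\tilde k' \ge i^*+1$; and $j=i^*+1$ is impossible because $B_{i^*+1}$ is empty. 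In every case $\tilde k'\ge i^*$, and if $\tilde k'>i^*$ then the perturbed value $h_{\tilde k'}=0$, combined with $h_{\tilde k'}\le 0$ (the barrier) and the fact that the block only lowers $h$, forces $h_{\tilde k'}=0$ already, i.e.\ $\tilde k' \in \{i^*+1,\dots,\tilde k-1\}$ with $B^{\mathcal N}_{1:\tilde k'}=\tilde k'$. Hence $\tilde k' \in \mathcal R$.

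For sufficiency I would exhibit, for each $m \in \mathcal R$, an explicit single-ball move to bin $B_{N+1}$ realizing $\tilde k'=m$. If $m=i^*\ge 1$, move any of the $i^*+1\ge 1$ balls in $B_{1:i^*}$: the decrement block then covers $\{j,\dots,N\}\supseteq\{i^*,\dots,\tilde k\}$, lowering $h_{i^*}$ to $0$ and breaking every zero above $i^*$. If $m>i^*$ is a zero, let $m^+$ be the next zero above $m$; the $m^+-m\ge 1$ balls in $\{B_{m+1},\dots,B_{m^+}\}$ supply a ball whose removal preserves $h_m=0$ while the block $\{j,\dots,N\}\supseteq\{m^+,\dots,\tilde k\}$ breaks all zeros in $(m,\tilde k]$; the case $i^*=0$ is this same construction with $m=0$. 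In each case the largest surviving zero is exactly $m$. The main obstacle is the necessity direction, specifically establishing the barrier property and invoking the emptiness of $B_{i^*+1}$ to rule out $\tilde k'<i^*$; once that structural fact is secured, both inclusions are bookkeeping over the single decrement interval.
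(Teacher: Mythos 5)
Your proof is correct, and it shares the paper's overall skeleton: necessity is split into the floor $\tilde k' \geq i^*$ plus the equality-point condition on $(i^*, \tilde k)$, and sufficiency is shown by explicit single-ball moves past $\tilde k$, with your construction (take a ball from one of the bins strictly between $m$ and the next equality point $m^+$ and send it to $B_{N+1}$) matching the paper's, which merely allows any destination bin $B_{j+s}$ with $j+s > \tilde k$. Where you genuinely diverge is in how the floor $\tilde k' \geq i^*$ is established. The paper argues directly on the perturbed configuration: if $\tilde k' < i^*$, then Lemma \ref{lemma::tildek+1} applied to $p'$ forces $B^{\mathcal N}_{1:i^*}$ to fall from $i^*+1$ to at most $i^*-1$, a change of magnitude at least $2$ that Lemma \ref{lemma:: Decreasing} rules out for a single perturbation --- no structural facts about the bins are needed. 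You instead prove a barrier property ($h_i \leq 0$ on $(i^*,\tilde k)$ for your height function $h_i = B^{\mathcal N}_{1:i}-i$) together with emptiness of $B_{i^*+1}$, and then run a three-way case analysis on the left endpoint $j$ of the decrement block. Both are valid; the paper's contradiction is shorter, while your barrier lemma buys something the paper elides: its assertion that $B^{\mathcal N}_{1:i} \leq i$ for $i \in \{i^*+1,\ldots,\tilde k-1\}$ ``by the definition of $i^*$'' really does require your no-crossing argument (the definition only forbids $h_i = 1$, and excluding $h_i \geq 2$ needs the observation that $h$ decreases by at most one per step and must return to $0$ at $\tilde k$). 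One caveat: your derivations of the barrier and of the emptiness of $B_{i^*+1}$ start from $h_{i^*}=1$, which presumes $i^* \geq 1$; when $i^* = 0$ one has $h_0 = 0$, the bin $B_1$ need not be empty, and the case $j = i^*+1$ in your analysis is genuinely available. Nothing breaks --- $\tilde k' \geq 0$ is automatic, and your equality-point step only uses the barrier, which still holds by the same no-crossing reasoning launched from $h_0 = 0$ --- but you should state the $i^* \geq 1$ hypothesis where you invoke emptiness, just as you already flag $i^* = 0$ separately in the sufficiency direction.
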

\begin{proof}
    We begin by proving the first statement that $\tilde{k}' < \tilde{k}$ implies $\tilde{k}' \in \mathcal{R}$. To do so, we proceed in two steps. First we show that $\tilde{k}
     \geq i^*$, so that $i^* \leq \tilde{k}' \leq \tilde{k} - 1$. Then we show that if $\tilde{k}' = i$ for some $i \in \{i^* + 1, \ldots, \tilde{k} - 1\}$ in which  $B^\mathcal{N}_{1:i} \neq i$, we arrive at a contradiction. 
     
     To see that $\tilde{k}' \geq i^*$, if $i^* > \tilde{k}',$ then $B^\mathcal{N}_{1:i^*} = i^*+1 $ becomes $B^\mathcal{N}_{1:i^*} \leq i^* - 1$ by Lemma \ref{lemma::tildek+1}; in other words, the change in magnitude of $B^\mathcal{N}_{1:i^*}$ is at least 2, which contradicts Lemma \ref{lemma:: Decreasing}. Next, if $i \in \{i^* + 1, \ldots, \tilde{k} - 1\}$, then $B^\mathcal{N}_{1:i} \leq i$ by the definition of $i^*$. This means if $B^\mathcal{N}_{1:i} \neq i$, then $B^\mathcal{N}_{1:i} < i$, so that Lemma \ref{lemma:: Decreasing} indicates $\tilde{k}'$ could not be $i$. 

    As for the second statement, let $\mathcal{R}$ be enumerated 
    \[
    i_R > i_{R-1} > \ldots > i_1 = i^*.
    \]
    For the case of $i_R$, $\tilde{k}' = i_R$ is achieved if and only if a p-value is moved from bin $B_j$ with $\tilde{k} \geq j > i_R$ to a bin $B_{j+s} > \tilde{k}$.
    For $R-1 \geq r \geq 2,$ by Lemma \ref{lemma:: Decreasing} it holds that $\tilde{k}'
     = i_r$ is achieved if and only if a p-value is moved from a bin $B_j$ with $i_{r+1} \geq j > i_r$ to a bin $B_{j+s}$ with $j+s > \tilde{k}$.
     Finally, for the case of $i_1,$  $\tilde{k}'
      = i_1 = i^*$ is achieved if and only if a p-value is moved from $B_j$ with $i^* \geq j$ to a $B_{j+s}$ with $j+s > \tilde{k}$. Finally, all these movements of p-values just described are always possible for any given $p = \{p_i\}_{i \in \mathcal{N}}$. 
\end{proof}

It follows that $\tilde{k}' \neq \tilde{k}$ if and only if $\tilde{k}' \in \mathcal{L} \cupdot \mathcal{R}$, so an efficient, optimal search procedure becomes straightforward. Informally, we iterate over the bins in reverse order, beginning with $N+1$ and terminating with $i^*$. At iteration (/bin number) $i$, if $i \in \mathcal{L} \cupdot \mathcal{R}$, then the trivial subproblem $\max_{p': ||p - p'||_0 \leq 1, \tilde{k}' = i} FDP[BH_q; p']$ is solved; otherwise, nothing is done. Upon termination, the best FDP encountered is the answer. This is summarized in Theorem \ref{theorem:: MOVE-1 Optimality}



\begin{lemma}[Decreasing a p-value] \label{lemma::Increasing} Let $p'$ be such that $||p - p'||_0 \leq 1$.
    If $p'$ is the decrease of a single p-value in $p$ from $B_j$ to $B_{j - s}$, where $j\in \{2, \ldots, N+1\}$ and $1 \leq s \leq j - 1$, then 
    \begin{itemize}
        \item $i \in \{1,\ldots, j - s -1\} \implies B^\mathcal{N}_{1:i}$ remains constant
        \item $i \in \{j - s, \ldots, j - 1\} \implies B^\mathcal{N}_{1:i}$ increases by 1
        \item $i \in \{j, \ldots, N + 1\} \implies B^\mathcal{N}_{1:i}$ remains constant. 
    \end{itemize}
\end{lemma}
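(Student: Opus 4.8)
The plan is to treat this as a pure counting argument, reducing the change in each cumulative load $B^\mathcal{N}_{1:i}$ to the change in a single membership indicator. First I would fix notation: since $\|p - p'\|_0 \le 1$, there is at most one index $m \in \mathcal{N}$ at which $p$ and $p'$ differ (if $p = p'$ there is nothing to prove), and by hypothesis $p_m \in B_j$ while $p'_m \in B_{j-s}$. Writing the cumulative load as a sum of membership indicators over all tests,
\[
B^\mathcal{N}_{1:i} = \sum_{l \in \mathcal{N}} \mathbbm{1}[p_l \in B_{1:i}],
\]
every term with $l \neq m$ is identical before and after the perturbation, so the net change in $B^\mathcal{N}_{1:i}$ induced by passing from $p$ to $p'$ equals the single difference $\mathbbm{1}[p'_m \in B_{1:i}] - \mathbbm{1}[p_m \in B_{1:i}]$.

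Next I would exploit the nested, disjoint structure of the bins. Because $B_{1:i} = \cup_{l=1}^i B_l$ is increasing in $i$ and the $B_l$ partition the range in which the p-values live, a point lying in exactly one bin $B_k$ satisfies $p \in B_{1:i}$ if and only if $i \ge k$. Applying this to $p_m \in B_j$ and to $p'_m \in B_{j-s}$ gives $\mathbbm{1}[p_m \in B_{1:i}] = \mathbbm{1}[i \ge j]$ and $\mathbbm{1}[p'_m \in B_{1:i}] = \mathbbm{1}[i \ge j-s]$, so the net change in $B^\mathcal{N}_{1:i}$ is exactly
\[
\mathbbm{1}[i \ge j-s] - \mathbbm{1}[i \ge j].
\]

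Finally I would read off the three cases by splitting the range of $i$ at the two thresholds $j-s$ and $j$, using $s \ge 1$ so that $j - s < j$: for $i \le j-s-1$ both indicators vanish and the difference is $0$; for $j-s \le i \le j-1$ the first indicator equals $1$ and the second equals $0$, giving $+1$; and for $i \ge j$ both indicators equal $1$, giving $0$. These are precisely the three claimed conclusions. I expect no genuine obstacle here, as the argument is routine bookkeeping; the only point meriting a word of care is the endpoint index $i = N+1$ (admissible since $j$ may equal $N+1$), where $B_{1:N+1}$ is the entire range and $B^\mathcal{N}_{1:N+1} = N$ is trivially invariant, consistent with the third case, so I would simply remark that the monotonicity-of-membership argument applies verbatim at that index as well.
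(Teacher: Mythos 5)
Your proof is correct: the reduction of the change in $B^\mathcal{N}_{1:i}$ to the single indicator difference $\mathbbm{1}[i \ge j-s] - \mathbbm{1}[i \ge j]$, followed by the three-way case split at the thresholds $j-s$ and $j$, establishes exactly the three claimed conclusions, and your handling of the endpoint $i = N+1$ is sound. The paper itself states this lemma without any proof, treating it as immediate bookkeeping about disjoint ordered bins, so your argument simply supplies the routine verification the authors leave implicit, and it matches the intended reasoning.
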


\begin{lemma}[Increasing a p-value] \label{lemma:: Decreasing} Let $p'$ be such that $||p - p'||_0 \leq 1$.
    If $p'$ is the increase of a single p-value in $p$ from $B_j$ to $B_{j + s}$, where $j\in \{1, \ldots, N\}$ and $1 \leq s \leq N+ 1 -j$, then 
    \begin{itemize}
        \item $i \in \{1,\ldots, j-1\} \implies B^\mathcal{N}_{1:i}$ remains constant
        \item $i \in \{j, \ldots, j + s - 1\} \implies B^\mathcal{N}_{1:i}$ decreases by 1
        \item $i \in \{j+s, \ldots, N + 1\} \implies B^\mathcal{N}_{1:i}$ remains constant. 
    \end{itemize}
\end{lemma}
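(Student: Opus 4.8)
The plan is to prove this purely by bookkeeping on the cumulative (prefix) bin counts, exploiting the balls-into-bins picture: $B^\mathcal{N}_{1:i}$ is just the number of p-values lying in the union $B_{1:i}$ of the first $i$ bins, and the perturbation moves exactly one ball from bin $j$ to the strictly-higher-indexed bin $j+s$ (the index increases because the p-value increases, i.e.\ $s\geq 1$). Since $\|p-p'\|_0\leq 1$, every other p-value contributes identically to the count before and after, so the only possible change in any $B^\mathcal{N}_{1:i}$ comes from the single displaced p-value. This reduces the whole statement to asking, for each $i$, whether that one p-value is counted in $B_{1:i}$ before the move, after the move, both, or neither.

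The first concrete step is to set up an exact indicator identity. Writing $\bar{B}^\mathcal{N}_{1:i}$ for the count computed from the perturbed collection $p'$, the displaced p-value lies in $B_{1:i}$ after the move iff $j+s\leq i$ and before the move iff $j\leq i$, so
\[
\bar{B}^\mathcal{N}_{1:i} - B^\mathcal{N}_{1:i} = \mathbbm{1}[j+s\leq i] - \mathbbm{1}[j\leq i].
\]
The proof then just evaluates this difference over the three index ranges. For $i\in\{1,\ldots,j-1\}$ we have $i<j<j+s$, so both indicators vanish and the count is unchanged. For $i\in\{j,\ldots,j+s-1\}$ we have $j\leq i$ but $i<j+s$, so the difference is $0-1=-1$, a decrease by one. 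For $i\in\{j+s,\ldots,N+1\}$ both indicators equal one and the difference is again $0$. These three evaluations reproduce exactly the three bullet points claimed.

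There is no real analytical obstacle here — the argument is entirely combinatorial — so the only points demanding genuine care are (i) using the $\|p-p'\|_0\leq 1$ hypothesis to justify that the indicator identity above is exact, i.e.\ that no other p-value can perturb any cumulative count; and (ii) getting the boundary indices right, in particular that bin $j+s$ is \emph{not} yet included at $i=j+s-1$ but \emph{is} included at $i=j+s$, since this strict/non-strict distinction is precisely what produces the clean transition from the ``decrease by $1$'' regime to the ``remains constant'' regime. (The companion lemma on \emph{decreasing} a p-value follows by the mirror-image version of the same identity, with the ball moving from $B_j$ to the lower-indexed bin $B_{j-s}$.)
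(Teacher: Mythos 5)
Your proof is correct: the indicator identity $\bar{B}^\mathcal{N}_{1:i} - B^\mathcal{N}_{1:i} = \mathbbm{1}[j+s\leq i] - \mathbbm{1}[j\leq i]$, evaluated over the three index ranges, is exactly the elementary bookkeeping the lemma rests on, and you handle the strict/non-strict boundary at $i = j+s-1$ versus $i = j+s$ correctly. The paper itself states this lemma without proof, treating it as immediate from the balls-into-bins picture, so your write-up simply supplies the intended (and only natural) argument.
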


\begin{theorem}[MOVE-1] \label{theorem:: MOVE-1 Optimality} Let $q \in (0,1)$, p-values $\{p_i\}_{i \in \mathcal{N}}$ and sets $\mathcal{H}_0$, $\mathcal{H}_1$ be given. 
For each $i \in \mathcal{L} \cup \mathcal{R},$ let
\[
FDP_i:= \max_{p': ||p - p'||_0 \leq 1, \tilde{k}' = i} FDP[BH_q; p'].
\]
Then
    \begin{equation*}
        \max_{p': ||p - p'||_0 \leq 1} FDP[BH_q; p'] = \left(\max_{i \in \mathcal{L} \cup \mathcal{R} \cup \{\tilde{k}\}} FDP_i\right)
    \end{equation*}
\end{theorem}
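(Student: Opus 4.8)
The plan is to reduce the continuous optimization over the set $\{p' : \|p-p'\|_0 \leq 1\}$ to a finite maximization over achievable adjusted rejection counts, by stratifying the feasible set according to the value $\tilde{k}'$ that each perturbation $p'$ induces through \eqref{def:: AdjustedRejectionCount}.

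First I would note that every feasible $p'$ induces a well-defined integer $\tilde{k}' \in \{0, \ldots, N\}$, so the objective splits as
\[
\max_{p': \|p-p'\|_0 \leq 1} FDP[BH_q; p'] = \max_{i \in \mathcal{I}} \; \max_{p': \|p-p'\|_0 \leq 1,\; \tilde{k}' = i} FDP[BH_q; p'],
\]
where $\mathcal{I}$ denotes the set of all rejection counts attainable by some feasible single perturbation, and where the inner maximum is exactly $FDP_i$ by definition. This is just the observation that maximizing over all feasible $p'$ is the same as first grouping the feasible $p'$ by the count they induce and then taking the best group.

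Next I would pin down $\mathcal{I}$ using the earlier structural results. Any attainable $\tilde{k}'$ is either strictly above $\tilde{k}$, strictly below, or equal to it. Proposition \ref{prop:: L} gives that the attainable counts above $\tilde{k}$ are exactly $\mathcal{L}$, and Proposition \ref{prop:: R} gives that those below are exactly $\mathcal{R}$, while $\tilde{k}$ itself is attained trivially by $p' = p$. Since $\mathcal{L} \subseteq \{\tilde{k}+1, \ldots, N\}$ and $\mathcal{R} \subseteq \{0, \ldots, \tilde{k}-1\}$ by their definitions, the three strata are pairwise disjoint and exhaust $\mathcal{I}$, so $\mathcal{I} = \mathcal{L} \cupdot \mathcal{R} \cupdot \{\tilde{k}\}$. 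For the equal stratum, Lemma \ref{lemma:: MustPerturb} ensures that any feasible $p'$ with $\tilde{k}' = \tilde{k}$ leaves the FDP unchanged, so $FDP_{\tilde{k}} = FDP[BH_q; p]$, consistent with indexing by $\tilde{k}$. Substituting this description of $\mathcal{I}$ into the stratified maximum yields the claimed identity.

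The only genuine obstacle is the bookkeeping in the stratification step: I must verify both that no feasible $p'$ produces a $\tilde{k}'$ outside $\mathcal{L} \cup \mathcal{R} \cup \{\tilde{k}\}$ (completeness, so the finite search misses nothing) and that every listed count is genuinely realizable by a single perturbation (feasibility, so each $FDP_i$ is well-posed). Both directions are already packaged into Propositions \ref{prop:: L} and \ref{prop:: R}, so once those and Lemma \ref{lemma:: MustPerturb} are invoked the theorem follows with essentially no further computation; the substance of MOVE-1's optimality is entirely inherited from the exhaustive characterization of achievable rejection counts.
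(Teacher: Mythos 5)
Your stratification of the feasible set by the induced rejection count $\tilde{k}'$, with completeness and feasibility supplied by Propositions \ref{prop:: L} and \ref{prop:: R} and the stratum $\tilde{k}' = \tilde{k}$ handled via Lemma \ref{lemma:: MustPerturb}, is exactly the argument the paper intends (the paper states the theorem as a summary of those results without a formal proof block). Your write-up is correct and essentially identical in approach, merely making the paper's implicit decomposition explicit.
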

This result explains that with one pass of the p-values from the largest to the smallest in the collection, we can ascertain the optimal perturbation $p'$. As the execution based on this result is straightforward, we omit the pseudocode for the sake of brevity. 

In Table \ref{tab:: MOVE-1_V_INCREASE-1}, we compare the average performance of INCREASE-1 against that of the optimal MOVE-1 over $10^4$ simulations. The experiments followed the setup described in Remark \ref{remark:: Zscores}, in which $N= 10^3$ p-values are derived from independently generated z-scores, with $N_0( = 900$) null z-scores i.i.d. $N(0,1)$ and $N_1 (= 100)$ alternative z-scores i.i.d. $N(\mu_1, 1)$, for $\mu_1 = 1,2.$
As Table \ref{tab:: MOVE-1_V_INCREASE-1} indicates, INCREASE-1 can provide nearly identical performance in adjustment to FDR; however, the perturbation distance $\| z - z'\|$ is on the average much greater than in MOVE-1. 

\begin{table}[h!]

\begin{tabular}{l|c|c|} 
        \cline{2-3} & \multicolumn{2}{|c|}{MOVE-1(INCREASE-1)} \\
         \cline{2-3} & \multicolumn{1}{|c|}{$\mathbbm{E}_zFDP[BH_q; z']$} & \multicolumn{1}{|c|}{Average $||z - z'||_1$}\\
         \hline
		\multicolumn{1}{|l|}{$\mu_1 = 2$}  & 0.140 (0.139) & 0.139 (1.563) \\ 
        \hline
		\multicolumn{1}{|l|}{$\mu_1 = 1$}  & 0.775 (0.751)  & 0.492 (2.297) \\
		\hline
        \multicolumn{1}{|l|}{$\mu_1 = 0$}  & .992 (0.990)  & 0.551 ($2.41$) \\
        \hline
	\end{tabular}
  \caption{Sample Average ($10^4$-batch) estimates of $\mathbbm{E}_zFDP[BH_q; z']$ under MOVE-1 and INCREASE-1 (in parentheses) when $\mu_1 = 0,1,2$ and $N = 10^3, q = 0.10, \pi_0 = 0.90,$ and all $\sigma^i = 1.$}
    \label{tab:: MOVE-1_V_INCREASE-1}
 \end{table}


\newpage

\subsection{Theoretical Analysis: Performance Guarantees and Insights into Adversarial Robustness}
\RejectionCountLargeMuOne*
\begin{proof}
When $B^1_{1:c} = N_1,$ it easily follows that $\tilde{k}_{+c} \geq c + N_1 + B^0_{1:N_1 + c}$. This combined with Theorem \ref{theorem:: LowerBound} easily yields the first inequality. 

As for the second inequality, let 
\[
\tilde{k}^0_{+c}:= \max\{i \in [0, N]_{\mathbbm{Z}}: B^0_{1:i} = i - (N_1 + c)\} \geq N_1 + c > 0.
\]
Then
    \begin{align*}
        \mathbbm{E}\left[\frac{B^0_{1:N}}{N} \;\; \| \;\; B^0_{N+1} \geq c\right] = \mathbbm{E}\left[\frac{B^0_{1:\tilde{k}^0_{+c}}}{\tilde{k}^0_{+c}} \;\; \| \;\; B^0_{N+1} \geq c\right] = \mathbbm{E}\left[\frac{\tilde{k}^0_{+c} - (N_1 + c)}{\tilde{k}^0_{+c}} \;\; \| \;\; B^0_{N+1} \geq c\right] = 1 - (N_1 + c) \mathbbm{E}\left[\frac{1}{\tilde{k}^0_{+c}} \;\; \| \;\; B^0_{N+1} \geq c\right]
    \end{align*}

implies that 
\begin{align*}
    \frac{1}{\mathbbm{E}\left[\tilde{k}^0_{+c} \;\; \| \;\; B^0_{N+1} \geq c\right]} \leq \mathbbm{E}\left[\frac{1}{\tilde{k}^0_{+c}} \;\; \| \;\; B^0_{N+1} \geq c\right] = \frac{1}{N_1 + c} \cdot \left(1 -  \mathbbm{E}\left[\frac{B^0_{1:N}}{N} \;\; \| \;\; B^0_{N+1} \geq c\right]\right), 
\end{align*}
so that 
\[
\mathbbm{E}\left[\tilde{k}_{+c} \;\; \| \;\; B^0_{N+1} \geq c, B^1_{1:c} = N_1\right] = \mathbbm{E}\left[\tilde{k}^0_{+c} \;\; \| \;\; B^0_{N+1} \geq c\right] \geq \frac{N_1 + c}{1 - \mathbbm{E}\left[\frac{B^0_{1:N}}{N} \;\; \| \;\; B^0_{N+1} \geq c\right]}, 
\]
since whenever $B^1_{1:c} = N_1$, it follows that $\tilde{k}_{+c} = \tilde{k}^0_{+c}$. 

Consequently, we derive 
\begin{align*}
    \frac{N_1 + c}{1 - \mathbbm{E}\left[\frac{B^0_{1:N}}{N} \;\; \| \;\; B^0_{N+1} \geq c\right]} &\leq \mathbbm{E}\left[\tilde{k}_{+c} \;\; \| \;\; B^0_{N+1} \geq c, B^1_{1:c} = N_1\right]\\
    & = \frac{\mathbbm{E}\left[\tilde{k}_{+c}; B^1_{1:c} = N_1, B^0_{N+1} \geq c\right]}{\mathbbm{P}\left(B^1_{1:c} = N_1\right) \mathbbm{P}\left(B^0_{N+1} \geq c\right)} \\
    &= \frac{\mathbbm{E}\left[\tilde{k}_{+c};B^0_{N+1} \geq c\right] - \mathbbm{E}\left[\tilde{k}_{+c}; B^1_{1:c} < N_1, B^0_{N+1} \geq c\right]}{\mathbbm{P}\left(B^1_{1:c} = N_1\right) \mathbbm{P}\left(B^0_{N+1} \geq c\right)}\\
    &= \mathbbm{E}\left[\tilde{k}_{+c} \;\; || \;\; B^0_{N+1} \geq c\right] \frac{1}{\mathbbm{P}\left(B^1_{1:c} = N_1\right)} - \mathbbm{E}\left[\tilde{k}_{+c}\;\; || \;\;  B^1_{1:c} < N_1, B^0_{N+1} \geq c\right] \frac{\mathbbm{P}\left(B^1_{1:c} < N_1\right)}{\mathbbm{P}\left(B^1_{1:c} = N_1\right)},
\end{align*}
yielding 
\[
\mathbbm{P}\left(B^1_{1:c} = N_1\right) \cdot \frac{N_1 + c}{1 - \mathbbm{E}\left[\frac{B^0_{1:N}}{N} \;\; \| \;\; B^0_{N+1} \geq c\right]} + \mathbbm{E}\left[\tilde{k}_{+c}\;\; || \;\; B^1_{1:c} < N_1, B^0_{N+1} \geq c\right] \cdot \mathbbm{P}\left(B^1_{1:c} < N_1\right) \leq \mathbbm{E}\left[\tilde{k}_{+c} \;\; || \;\; B^0_{N+1} \geq c\right].
\]
\end{proof}

\BHBallotTheorem*
\begin{proof}
    We will appeal to the following result of \cite{takacs1962generalization}:
    \begin{lemma}[Theorem 1 of \cite{takacs1962generalization}] \label{prop::Takacs}
    Let there be $n\geq1$ non-negative integers $z_1, \ldots, z_n$ summing to $x$. If $\tilde{\pi}$ denotes a permutation drawn uniformly at random, then 
    \[
    \mathbbm{P}_{\tilde{\pi}}\left(\cap_{r = 1}^n \left[\sum_{i = 1}^r z_{\tilde{\pi}(i)} < r\right]\right) = \left[1 - \frac{x}{n}\right]^+.
    \]
\end{lemma}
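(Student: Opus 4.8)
The plan is to recast the statement as a counting problem over cyclic rotations and then settle it with the Dvoretzky--Motzkin cycle lemma, after which a short orbit-averaging argument delivers the exact fraction $[1-x/n]^+$. First I would pass to increments bounded above by $1$. Setting $b_i := 1 - z_i$, the hypothesis $z_i \geq 0$ gives $b_i \leq 1$ for every $i$, and $\sum_i z_i = x$ gives $\sum_i b_i = n - x$. The event of interest then becomes a strict-positivity event for partial sums, since for any permutation $\pi$ and any $r \in \{1, \ldots, n\}$,
\[
\sum_{i=1}^{r} z_{\pi(i)} < r \iff \sum_{i=1}^{r} b_{\pi(i)} > 0 .
\]
Hence the target $\mathbbm{P}_{\tilde{\pi}}\bigl(\cap_{r=1}^n [\,\sum_{i=1}^r z_{\tilde{\pi}(i)} < r\,]\bigr)$ is exactly the fraction of the $n!$ permutations $\pi$ for which every partial sum of $(b_{\pi(1)}, \ldots, b_{\pi(n)})$ is strictly positive.

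Next I would invoke the cycle lemma: for integers $c_1, \ldots, c_n$ with each $c_i \leq 1$ and $\sum_i c_i = k \in \{0, 1, \ldots, n\}$, exactly $[k]^+$ of the $n$ cyclic shifts $(c_{1+j}, \ldots, c_{n+j})$ (indices read modulo $n$, for $j = 0, \ldots, n-1$) have all partial sums strictly positive. Applied to $(b_1, \ldots, b_n)$, whose sum is $k = n - x \geq 0$, this yields that precisely $n - x$ of the $n$ cyclic rotations of any fixed arrangement of the $b_i$ meet the positivity requirement. I would supply the cycle lemma itself by the standard adjacent-merging induction on $n$: when $k \geq 1$ the bound $c_i \leq 1$ forces some coordinate to equal $1$ adjacent to a non-positive neighbor, and collapsing such a pair reduces to an $(n-1)$-term sequence of the same type with sum $k-1$, while $k = 0$ is handled directly (no shift qualifies, matching $[k]^+ = 0$).

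Finally I would average over cyclic orbits. The rotation map $\sigma:(\pi(1), \ldots, \pi(n)) \mapsto (\pi(2), \ldots, \pi(n), \pi(1))$ acts on the $n!$ permutations, and because a permutation carries \emph{distinct} labels, no nontrivial power of $\sigma$ can fix it; hence every orbit has size exactly $n$ and the permutations partition into $(n-1)!$ such orbits. Within a single orbit the $n$ associated value-sequences $(b_{\sigma^j\pi(1)}, \ldots, b_{\sigma^j\pi(n)})$ are precisely the $n$ cyclic shifts of one common sequence, so the cycle lemma contributes exactly $n - x$ ``good'' permutations per orbit — crucially, counting shifts by the index $j$ accommodates any repeated values or periodicity among the $b_i$. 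Summing over the $(n-1)!$ orbits gives $(n-1)!\,(n-x)$ good permutations out of $n!$, so the uniform probability equals $(n-x)/n = 1 - x/n = [1 - x/n]^+$, the final equality holding because $0 \leq x \leq n$.

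I expect the main obstacle to be the cycle lemma itself: the reduction in the first paragraph and the orbit-averaging in the last are routine once it is in hand, but the cycle lemma is where the essential combinatorics lives. The subtle point to get right there is that the conclusion counts rotations \emph{by index}, so that the free cyclic action on label-permutations (giving orbits of size exactly $n$) correctly transfers the per-sequence count of $n-x$ into a per-orbit count, independent of any coincidences among the underlying values.
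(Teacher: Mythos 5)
Your overall route is sound, and note first that the paper itself supplies no proof of this statement: Lemma \ref{prop::Takacs} is imported verbatim as Theorem 1 of Tak\'acs (1962) and used as a black box inside the proof of Corollary \ref{corollary:: BH_BallotTheorem}, so any complete argument here goes beyond what the paper records. Your reduction $b_i := 1 - z_i$ (so that $\sum_{i=1}^r z_{\tilde{\pi}(i)} < r$ iff $\sum_{i=1}^r b_{\tilde{\pi}(i)} > 0$), the appeal to the Dvoretzky--Motzkin cycle lemma, and the orbit-averaging step together constitute the standard modern proof of Tak\'acs's theorem, and your observation that the rotation action on label-permutations is free -- orbits of size exactly $n$, so the per-sequence count of good starting \emph{indices} transfers to a per-orbit count of good permutations even when the values $b_i$ are repeated or periodic -- correctly handles the one delicate point in the averaging. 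The unaddressed case $x > n$ is trivial (the $r = n$ constraint already fails), so restricting to $0 \leq x \leq n$ costs nothing.

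The one genuine flaw is in your sketch of the cycle lemma itself, which you rightly identify as the crux. You claim that collapsing an adjacent pair $c_i = 1$, $c_{i+1} \leq 0$ ``reduces to an $(n-1)$-term sequence of the same type with sum $k-1$.'' If collapsing means replacing the pair by the single entry $c_i + c_{i+1} \leq 1$ -- the operation for which the good-start correspondence actually works -- then the sum is \emph{preserved}, not decreased: the reduced sequence has sum $k$, the induction runs on $n$ with $k$ fixed, and the terminal case is not $k = 0$ but the all-ones sequence $n = k$ (where all $n$ shifts are good). The correct bookkeeping is that the shift starting at position $i+1$ is always bad, since its first term is $\leq 0$, while the remaining $n-1$ starting positions biject with those of the merged sequence with positivity of all partial sums preserved in both directions. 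If instead you meant deleting the entry $c_i = 1$ outright -- which does yield length $n-1$ and sum $k-1$ -- then the step fails as stated: prefixes of a shifted sequence that contain the deleted position have their sums lowered by exactly $1$, so positivity before and after deletion are not equivalent, and the assertion that the good-start count drops by exactly one is precisely what would need proving (it is true, but as a consequence of the lemma, not an available ingredient of its proof). Rewrite the induction with the sum-preserving merge and your argument closes completely.
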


    Let $\tilde{\pi}$ be a random permutation on $\{1,\ldots, n\}$ that is drawn uniformly at random, independently of $\tilde{B}$. Since $\tilde{B} = (\tilde{B}_1, \ldots, \tilde{B}_n) \sim Multinomial\Big(x, (p, \ldots, p)\Big)$, then $\tilde{B}_{\tilde{\pi}} := (\tilde{B}_{\tilde{\pi}(1)}, \ldots, \tilde{B}_{\tilde{\pi}(n)})$ is equivalent in distribution to $\tilde{B}$, or $\tilde{B}_{\tilde{\pi}}\overset{D}{=}\tilde{B}$. Therefore, 
    \begin{align*} \mathbbm{P}_{\tilde{B}}\left(\cap_{r = 1}^n \left[\sum_{i = 1}^r \tilde{B}_{i} < r\right]\right) &= \mathbbm{E}_{\tilde{B}}\left[\mathbbm{P}_{\tilde{\pi}}\left(\cap_{r = 1}^n \left[\sum_{i = 1}^r \tilde{B}_{\tilde{\pi}(i)} < r\right]\right)\right] \\
    &= \mathbbm{E}_{\tilde{B}}\left[1 - \frac{x}{n}\right]  = 1 - \frac{x}{n}.
    \end{align*}
\end{proof}

\RejectZero*
\begin{proof}
The event 
$[\tilde k=\ell]$ is equivalent to  $\{ \cap_{j = \ell+1}^N \left[B^{\mathcal{N}}_{\ell + 1:j} < j- \ell\right]\} \cap \left[B^{\mathcal{N}}_{1:\ell} = \ell\right]$, an intersection of two events. Regarding the event $\left[B^{\mathcal{N}}_{1:\ell} = \ell\right]$, because $\mu_1 = 0,$ it is clear that 
$\mathbbm P(B^{\mathcal{N}}_{1:\ell} = \ell) = {N\choose \ell} \left(\frac{q\ell}{N}\right)^{\ell}(1 - \frac{q\ell}{N})^{N-\ell}$. To complete the characterization of  $\mathbbm{P}\left(\tilde{k} = \ell\right)$ it suffices to find $\mathbbm{P}\left( \cap_{j = \ell+1}^N \left[B^{\mathcal{N}}_{\ell + 1:j} < j- \ell\right] \;\; ||
    \;\; \left[B^{\mathcal{N}}_{1:\ell} = \ell\right] \right)$.
Towards doing so, we note that conditioned on $[B^{\mathcal{N}}_{1:\ell}=\ell]$, we have
\[
B^{\mathcal{N}}_{\ell+1:N}\sim Binom\left(N-\ell,\frac{ \frac{(N-\ell)q}{N}}{1-q+\frac{(N-\ell)q}{N}}\right), 
\]
and, upon further conditioning on $B^\mathcal{N}_{\ell+1:N}$,
\[
(B^{\mathcal{N}}_{\ell + 1}, \ldots, B^{\mathcal{N}}_N) \sim Multinom\left(B^{\mathcal{N}}_{\ell+1:N}, \left(\frac{1}{N-\ell}, \ldots, \frac{1}{N-\ell}\right)\right).
\]
Hence, we derive 
\[
\mathbbm{P}\left( \cap_{j = \ell+1}^N \left[B^{\mathcal{N}}_{\ell + 1:j} < j- \ell\right] \;\; ||\;\; [B^{\mathcal{N}}_{1:\ell}=\ell], B^{\mathcal{N}}_{\ell+1:N}\right) = 1 - \frac{B^{\mathcal{N}}_{\ell + 1: N}}{N-\ell},
\]
by Corollary \ref{corollary:: BH_BallotTheorem}. 
To conclude, we integrate out
$B^\mathcal{N}_{\ell+1:N}$ from this derivation; more precisely, 
\begin{align*}
    \mathbbm{P}&\left( \cap_{j = \ell+1}^N \left[B^{\mathcal{N}}_{\ell + 1:j} < j- \ell\right] \;\; ||
    \;\; \left[B^{\mathcal{N}}_{1:\ell} = \ell\right] \right) \\
    &= \mathbbm{E}_{B^{\mathcal{N}}_{\ell+1:N}}\Big[\mathbbm{P}\left( \cap_{j = \ell+1}^N \left[B^{\mathcal{N}}_{\ell + 1:j} < j- \ell\right] \;\; ||\;\; [B^{\mathcal{N}}_{1:\ell}=\ell], B^{\mathcal{N}}_{\ell+1:N}\right) \;\; || \;\; [B^{\mathcal{N}}_{1:\ell}=\ell]\Big] \\
    &= 1 - \frac{\mathbbm{E}\left[B^{\mathcal{N}}_{\ell + 1: N}\;\; ||\;\; [B^{\mathcal{N}}_{1:\ell}=\ell]\right]}{N-\ell} = 1 - \frac{ \frac{(N-\ell)q}{N}}{1-q+\frac{(N-\ell)q}{N}} = \frac{1-q}{1 - q + \frac{(N-\ell)q}{N}}.
\end{align*}

As for the second statement, in similar fashion to above, Corollary \ref{corollary:: BH_BallotTheorem} yields
\begin{align*}
\mathbbm{P}\left(\cap_{j=c+1}^N [B^\mathcal{N}_{c+1:j} < j-c] \;\; || \;\; [B^\mathcal{N}_{1:c} = 0], B^0_{N+1}, B^1_{c+1:N}\right)
&= 1 - \frac{N_0 - B^0_{N+1} + B^1_{c+1:N}}{N-c},
\end{align*}
implying 
\[\mathbbm{P}\left(\cap_{j=c+1}^N [B^\mathcal{N}_{c+1:j} < j-c] \;\; || \;\; [B^\mathcal{N}_{1:c} = 0], B^0_{N+1}\right) = 1 - \frac{N_0 - B^0_{N+1} + E[B^1_{c+1:N}\;\; || \;\; B^1_{1:c} = 0]}{N-c}.
\]
Hence, 
   \begin{align*}
        &\mathbbm{P}\left(\tilde{k}_{+c} = c \;\; || \;\; B^0_{N+1}\right)
        = \mathbbm{P}\left(B^\mathcal{N}_{1:c} = 0 \;\; || \;\; B^0_{N+1} \right) \mathbbm{P}\left(\cap_{j=c+1}^N [B^\mathcal{N}_{c+1:j} < j-1] \;\; || \;\; B^\mathcal{N}_{1:c} = 0, B^0_{N+1}\right)\\
        &= (1-\frac{cq}{N})^{N_1}\cdot (1 - \frac{c}{N})^{N_0 - B^0_{N+1}} \left(1 - \frac{N_0 - B^0_{N+1}}{N-c}-\frac{N_1 q}{N-cq}\right).
    \end{align*}
    
\end{proof}

\MuOneEqualsZeroBound*
\begin{proof}
Observe that $\Delta_c = \mathbbm{E}\left[\frac{c}{\tilde{k}_{+c}} ; B^0_{N+1} \geq c \right] \geq \mathbbm{P}\left(\tilde{k}_{+c} = c,  B^0_{N+1} \geq c\right).$

By Theorem \ref{theorem:: RejectZero},
\begin{align*}
\mathbbm{P}\left(\tilde{k}_{+c} = c,  B^0_{N+1} \geq c\right) &= \mathbbm{E}\left[\mathbbm{P}\left(\tilde{k}_{+c} = c \;\; || \;\; B^0_{N+1} \right) ; B^0_{N+1} \geq c\right]  \\
&= \mathbbm{E}\left[\mathbbm{P}\left(B_{1:c} = 0 \;\; || \;\; B^0_{N+1} \right)  \cdot \underset{(\#)}{\underbrace{\mathbbm{P}\left( \cap_{j = c+1}^N [B^\mathcal{N}_{c+1:j} < j - c] \;\; || \;\; [B^\mathcal{N}_{1:c} = 0], B^0_{N+1}\right)}}; B^0_{N+1} \geq c\right] 
\end{align*}
where we now proceed to lower bound (\#). The following discussion outlines how we proceed in the case of $c=1,$ but this is without loss of generality. 

Let $B^0_{N+1}$ and $B^1_{2:N}$ be given, along with the event $[B^\mathcal{N}_1 = 0].$ Then there are $N_0 - B^0_{N+1}$ many null p-values and $B^1_{2:N}$ many alternative p-values each of whose assignment to one of bin number $2, \ldots, N$ remains stochastic. Let there be an arbitrary enumeration of these null p-values, upon which we let the collection of their bin-assignment random variables be denoted $(\alpha^0_i)_{i = 1}^{N_0 - B^0_{N+1}}$. Let there also be an arbitrary enumeration of these alternative p-values, upon which we let the collection of their bin-assignment random variables be denoted $(\alpha^1_i)_{i = 1}^{B^1_{2:N}}$. More precisely, $\alpha^0_{i} \sim Unif(\{2, \ldots, N\})$, and $\alpha^1_i = j$ with probability $\frac{q/N + \delta_j}{q + \delta - (q/N + \delta_1)}$, for $j = 2, \ldots, N$. Finally, we will write $\mathbbm{P}^{bins}_{1}$ for the joint distribution derived from the independent coupling of all the random variables $(\alpha^0_i)_{i = 1}^{N_0 - B^0_{N+1}}$ and $(\alpha^1_i)_{i = 1}^{B^1_{2:N}}$. For contrast, let $\tilde{\alpha}^1_i \sim  Unif(\{2, \ldots, N\})$, which expresses the random bin assignment of any alternative p-value to one of $B_2, \ldots, B_N$ given that $B^\mathcal{N}_1 = 0$, were $\mathbbm{P}^1 = U(0,1)$. Then $\mathbbm{P}^{bins}_{0}$, the joint derived from the independent coupling of $(\alpha^0_i)_{i = 1}^{N_0 - B^0_{N+1}}$ and $(\tilde{\alpha}^1_i)_{i = 1}^{B^1_{2:N}}$, offers a useful counterpoint to $\mathbbm{P}^{bins}_{1}$. By the chain rule for KL-divergence, we find
\begin{align*}
D_{KL}\left(\mathbbm{P}^{bins}_{0} ||\mathbbm{P}^{bins}_{1}\right) &= \sum_{i = 1}^{N_0 - B^0_{N+1}}  D_{KL}\left(\alpha^0_i || \alpha^0_i\right) + \sum_{i = 1}^{B^1_{2:N}}D_{KL}\left(\tilde{\alpha}^1_i || \alpha^1_i\right) \\
&= B^1_{2:N} \underset{=: D_{KL}(\mathbbm{P}^1,1)}{\underbrace{\sum_{j} \frac{1}{N-1}log\left(\frac{\frac{1}{N-1}}{\frac{q/N + \delta_j}{q + \delta - (q/N + \delta_1)}}\right)}},
\end{align*}
and by Pinsker's Inequality, for any event $E,$
    \begin{align*}
        \mathbbm{P}^{bins}_{1}\left(E\right) \geq \mathbbm{P}^{bins}_{0}\left(E\right) - \sqrt{\frac{ln 2}{2}D_{KL}\left(\mathbbm{P}^{bins}_{0} ||\mathbbm{P}^{bins}_{1}\right) }.
    \end{align*}
Let $a_i^0$ (respectively $a_i^1$) denote the realization of the $i$-th null (respectively alternative) p-value's bin assignment. Then if we set $E$ to be the collection of realizations $(a^0_i)_{i = 1}^{N_0 - B^0_{N+1}}$ and $(a^1_i)_{i = 1}^{B^1_{2:N}}$ that are consistent with $\cap_{j=2}^N [B_{2:j} < j-1]$, we find 
\begin{align*}
        &\mathbbm{P}\left(\cap_{j=2}^N [B^\mathcal{N}_{2:j} < j-1] \;\; || \;\; B^\mathcal{N}_1 = 0, B^0_{N+1}, B^1_{2:N}\right) \\
        &= \mathbbm{P}_{\mu_1}\left(E\right) \geq \mathbbm{P}_{0}\left(E\right) - \sqrt{\frac{ln2}{2}D_{KL}\left(\mathbbm{P}^{bins}_{0} ||\mathbbm{P}^{bins}_{1}\right) } \\
        &= \left(1 - \frac{N_0-B^0_{N+1} + B^1_{2:N}}{N-1}\right) - \sqrt{\frac{ln2}{2}B^1_{2:N}D_{KL}(\mathbbm{P}^1, 1) }. \tag{Corollary \ref{corollary:: BH_BallotTheorem}}
    \end{align*}

Hence, for the case of general $c,$
\begin{align*}
&(\#) = \mathbbm{E}\left[\mathbbm{P}\left( \cap_{j = c+1}^N [B^\mathcal{N}_{c+1:j} < j - c] \;\; || \;\; [B^\mathcal{N}_{1:c} = 0], B^0_{N+1}, B^1_{c+1:N}\right) \;\; || \;\; [B^\mathcal{N}_{1:c} = 0], B^0_{N+1}\right] \\
&\geq \mathbbm{E}\left[\left(1 - \frac{N_0 - B^0_{N+1} + B^1_{c+1:N}}{N-c}\right) - \sqrt{\frac{ln 2}{2} B^1_{c+1:N}D_{KL}(\mathbbm{P}^1, c) } \;\; || \;\; [B^\mathcal{N}_{1:c} = 0], B^0_{N+1}\right] \tag{Pinsker's Inequality}\\
&\geq 
1 - \frac{N_0 - B^0_{N+1} + \mathbbm{E}\left[B^1_{c+1:N}||B^1_{1:c} = 0\right]}{N-c} - \sqrt{\frac{ln 2}{2} \mathbbm{E}\left[B^1_{c+1:N}||B^1_{1:c} = 0\right]D_{KL}(\mathbbm{P}^1, c) }.
\end{align*}
It follows that  
\begin{align*}
&\mathbbm{P}\left(\tilde{k}_{+c} = c,  B^0_{N+1} \geq c\right)\geq \\&\hspace{-2mm}\mathbbm{E}\left[\underset{(1 - \frac{cq}{N} - \delta_{1:c})^{N_1} (1 - \frac{c}{N})^{N_0 - B^0_{N+1}}}{\underbrace{\mathbbm{P}\left(B^\mathcal{N}_{1:c} = 0 \;\; || \;\; B^0_{N+1} \right)}}\left(1 - \frac{N_0 - B^0_{N+1} + \mathbbm{E}\left[B^1_{c+1:N}||B^1_1 = 0\right]}{N-c} - \sqrt{\frac{ln 2}{2} \mathbbm{E}\left[B^1_{c+1:N}||B^1_1 = 0\right]D_{KL}(\mathbbm{P}^1, c) } \right); B^0_{N+1} \geq c\right].
\end{align*}
To conclude, 
\begin{align*}
    &\frac{\mathbbm{P}\left(\tilde{k}_{+c} = c,  B^0_{N+1} \geq 1\right)}{(1 - \frac{cq}{N} - \delta_{1:c})^{N_1}}\\
    &\geq \mathbbm{E}\left[(1 - \frac{c}{N})^{N_0 - B^0_{N+1}} \left(1 - \frac{N_0 - B^0_{N+1} + \mathbbm{E}\left[B^1_{c+1:N}||B^1_{1:c} = 0\right]}{N-c} - \sqrt{\frac{ln 2}{2} \mathbbm{E}\left[B^1_{2:N}||B^1_1 = 0\right]D_{KL}(\mathbbm{P}^1, c) } \right); B^0_{N+1} \geq c\right]\\
    &= \left(1 - \frac{c}{N}\right)^{N_0} \left(1 - \underset{\pi(c, \mu_1)}{\underbrace{\frac{N_0 + \mathbbm{E}\left[B^1_{c+1:N}||B^1_{1:c} = 0\right]}{N-c}}} - \underset{V(c,\mu_1)}{\underbrace{\sqrt{\frac{ln 2}{2} \mathbbm{E}\left[B^1_{c+1:N}||B^1_{1:c} = 0\right]D_{KL}(\mathbbm{P}^1, c)}}} \right) \underset{(**)}{\underbrace{\mathbbm{E}\left[(1-\frac{c}{N})^{-B^0_{N+1}}; B^0_{N+1} \geq c\right]}}\\
    & \;\;\;\;\; + \underset{(***)}{\underbrace{\mathbbm{E}\left[(1-\frac{c}{N})^{N_0-B^0_{N+1}} \frac{B^0_{N+1}}{N-c} ; B^0_{N+1} \geq c\right]}},
\end{align*}
where 
\[
(**) = \mathbbm{E}\left[\left(1 - \frac{c}{N}\right)^{-B^0_{N+1}}\right] - \mathbbm{E}\left[\left(1 - \frac{c}{N}\right)^{-B^0_{N+1}}; B^0_{N+1} \leq c - 1\right] = \frac{N-cq}{N-c} - \mathbbm{E}\left[\left(1 - \frac{c}{N}\right)^{-B^0_{N+1}}; B^0_{N+1} \leq c - 1\right],
\]
\[
(***) 
\geq \mathbbm{P}\left(B^0_{N+1} \geq c\right) \left(1 - \frac{c}{N}\right)^{N_0 - \mathbbm{E}\left[B^0_{N+1} || B^0_{N+1}\geq c\right]} \frac{\mathbbm{E}\left[B^0_{N+1} || B^0_{N+1}\geq c\right]}{N-c} ~~~(\text{Jensen's})
\]
\end{proof}

\subsection{Simulations and Data Experiments}
\label{sec:: AppendixSimulations}
\subsubsection{INCREASE-c Simulations on PRDS p-values}
In Table \ref{tab:: INCREASE-C On Conformal} we illustrate the effectiveness of INCREASE-c in disrupting the nonparametric outlier detection method of \cite{bates2023testing} that is based on the application of BH on conformal p-values, and in doing so, demonstrate effectiveness of INCREASE-c on PRDS p-values. We follow the simulation setting of Section 5.2 in \cite{bates2023testing}, using their publicly available source code to generate the conformal p-values. In short, a data set is generated in $\mathbbm R^{50}$, along with $10^3$ training observations used to fit a one-class SVM classifier, as well as $10^3$  observations forming a calibration set to be used with a test set to derive (marginal) conformal p-values. In each of $10^3$ independent replications, INCREASE-c was applied to a new test set consisting of $10^3$ conformal p-values, designed to discern inliers (signals drawn from a mixture of multivariate gaussians with identity covariance matrices) from outliers (signals drawn from a mixture of multivariate gaussians with identity covariance matrices scaled by a strength $\sqrt{a}$). This was performed for $a\in \{1,1.5,2,2.5,3\}$; a signal strength $a=1$ corresponds to identical null and alternative distributions, while larger values of $a$ make it easier to detect outliers. We set the fraction of outliers in each test set to $\pi_1=.1$, so that a fraction $\pi_0 = .90$ of the observations are inliers in each data set. 

\begin{figure}
\begin{floatrow}
\ffigbox{%
  \includegraphics[scale = 0.40, left]{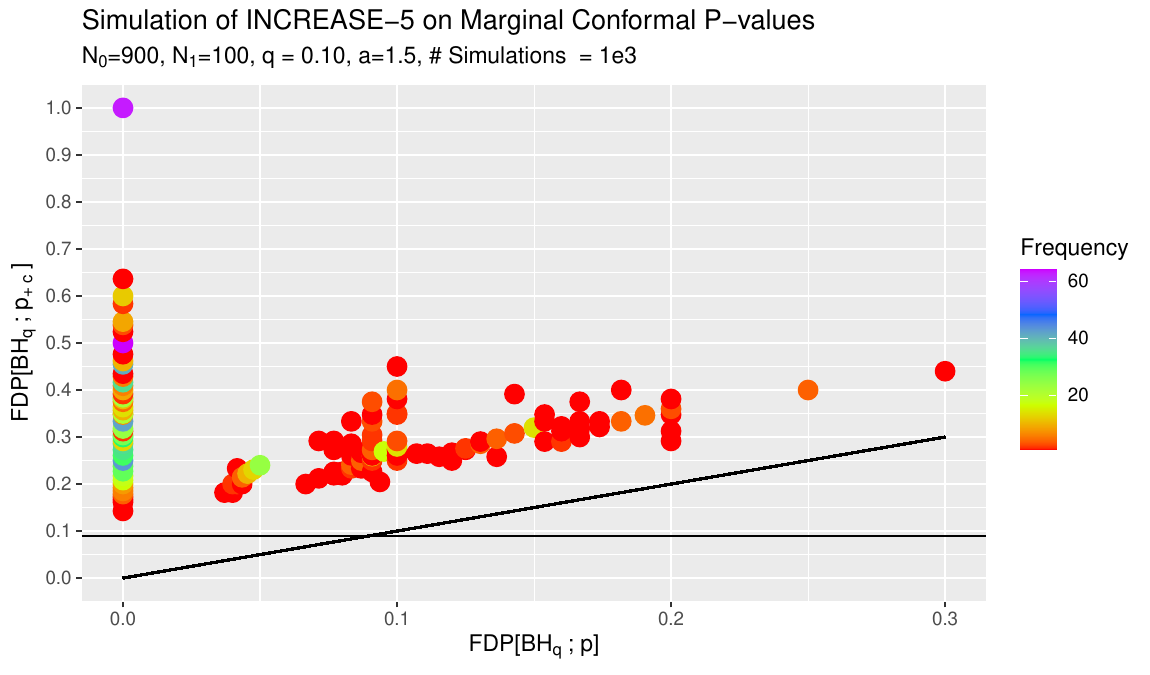}
  
}{%
  \caption{
     $10^3$ simulations of FDP by $BH_q$ with and without application of INCREASE-5 on marginal conformal p-values \cite{bates2023testing} derived from an SVM one-class classifier on a test set with outliers drawn with $a = 1.5$. 
     }
     \label{fig:: INCREASE-C_On_Conformal}
}
\capbtabbox{%
    \begin{adjustbox}{width=2.2in,totalheight=.9in,center}
      \begin{tabular}{|l|rrrrr|} 
    		\hline
    		$c\;\;\;|\;\;\;a$            & 1  & 1.5& 2 & 2.5&  3 \\ \hline
    		1  & 1             &.559           & .096 & .096       & .098    \\
            5  & 1                & .368      & .142  &.135          & .133    \\
            10 &  .997             &.435          &  .190  &.174          & .170\\
            50 &  .992             & .657        & .429   &.389         & .383\\
    		\hline
    	\end{tabular}

     \end{adjustbox}
     \vspace{.5cm}
}
{\caption{Sample Average ($10^3$-batch) estimates of $\mathbbm{E}FDP[BH_q; p_{+c}]$ on marginal conformal p-values for outlier detection \cite{bates2023testing}, in terms of signal strength $a$ and budget $c$.} 
    \label{tab:: INCREASE-C On Conformal}}
\end{floatrow}
\end{figure}

\newpage

\end{document}